 \newtheorem{thm}{Theorem}[section]
 \newtheorem{corollary}[thm]{Corollary}
 \newtheorem{lemma}[thm]{Lemma}
 \newtheorem{Proposition}[thm]{Proposition}
 \theoremstyle{definition}
 \theoremstyle{remark}
 \numberwithin{equation}{section}
 \newcommand{\R}{\mathbb{R}}
   \newcommand{\U}{\mathcal{U}}
    \newcommand{\V}{\mathcal{V}}
\begin{document}

%-------------------------------------------------------------------------
% editorial commands: to be inserted by the editorial office
%
%---------------------------------------------------------------------------
%Insert here the title, affiliations and abstract:
%

\title[ Involutes of Polygons of Constant Width in Minkowski Planes]
 {Involutes of Polygons of Constant Width in Minkowski Planes}

%----------Author 1
\author[M.Craizer]{Marcos Craizer}

\address{%
Departamento de Matem\'{a}tica- PUC-Rio\br
Rio de Janeiro\br
BRAZIL}
\email{craizer@puc-rio.br}

\author[H.Martini]{Horst Martini}

\address{%
Faculty of Mathematics\br
University of Technology\br
09107 Chemnitz\br
GERMANY}
\email{martini@mathematik.tu-chemnitz.de}

\thanks{The first named author wants to thank CNPq for financial support during the preparation of this manuscript. \newline E-mail of the corresponding author: craizer@puc-rio.br}
%----------classification, keywords, date

\subjclass{ 52A10, 52A21, 53A15, 53A40}

\keywords{area evolute, Barbier's theorem, center symmetry set, curvature, curves of constant width, Discrete Differential Geometry, evolutes, Minkowski Geometry, normed plane, equidistants, involutes, support function,
width function}

\date{June 18, 2015}
%----------additions
%%% ----------------------------------------------------------------------

\begin{abstract}
Consider a convex  polygon $P$ in the plane, and denote by $U$ a homothetical copy of the vector sum of $P$ and $-P$.
Then the polygon $U$, as unit ball, induces a norm such that, with respect to this norm, $P$ has constant Minkowskian width.
We define notions like  Minkowskian curvature, evolutes and involutes for polygons of constant $U$-width, and we prove that many properties of the smooth case, which is already completely studied, are preserved.
The iteration of involutes generates a pair of sequences of polygons of constant width with respect to the Minkowski norm and its dual norm, respectively.
We prove that these sequences are converging to symmetric polygons with the same center, which can be regarded as a central point of the polygon $P$.
\end{abstract}

%%% ----------------------------------------------------------------------
\maketitle
%%% ----------------------------------------------------------------------
%\tableofcontents

\section{Introduction}

A {\it Minkowski} or {\it normed plane} is a $2$-dimensional vector space with a norm. This norm is induced by its \emph{unit ball} $U$, which is a compact, convex set centered at the origin
(or, shortly, \emph{centered}). Thus, we write $(\R^2,U)$ for a Minkowski plane with unit ball $U$, whose boundary is the \emph{unit circle} of $(\R^2,U)$.
The geometry of normed planes and spaces, usually called \emph{Minkowski Geometry} (see \cite{Thompson96}, \cite{Ma-Sw-We}, and \cite{Ma-Sw}), is
strongly related to and influenced by the fields of Convexity, Banach Space Theory, Finsler Geometry and, more recently, Discrete and Computational Geometry.
The present paper can be considered as one of the possibly first contributions to Discrete Differential Geometry in the spirit of Minkowski Geometry.
The study of special types of curves in Minkowski planes is a promising subject (see the survey \cite{Ma-Wu}), and the particular case of curves of constant Minkowskian width has been studied for a long time
(see \cite{Chakerian66}, \cite{Chakerian83}, \cite{He-Ma}, and \S~2 of \cite{Ma-Sw}).
A curve $\gamma$ has constant Minkowskian width with respect to the unit ball $U$ or, shortly, \emph{constant $U$-width},
if $h(\gamma) + h(-\gamma)$ is constant with respect to the norm induced by $U$, where $h(\gamma)$ denotes the support
function of $\gamma$. Another concept from the classical theory of planar curves important for our paper is that of \emph{involutes and evolutes}; see, e.g., Chapter 5 of \cite{Gray} and,
respectively, \cite{GAS}. For natural
generalizations of involutes, which also might be extended from the Euclidean case to normed planes, we refer to \cite{So} and \cite{Ap-Mn}. And in \cite{Tanno} it is
shown how the concept of evolutes and involutes can help to construct curves
of constant width in the Euclidean plane.

In this paper, we consider convex polygons $P$ of constant Minkowskian width in a normed plane, for  short calling them \emph{CW-polygons}. If $P$ is a CW polygon, then the unit ball $U$ is necessarily a centered polygon
whose sides and diagonals are suitably parallel to corresponding sides and diagonals of $P$ (sometimes with diagonals suitably meaning also
sides; see \S\S~ 2.1 below). If, in particular, $U$ is homothetic to $P+(-P)$, then, and only then, $P$ is
of constant $U$-width in the Minkowski plane induced by $U$.

There are many results concerning \emph{smooth CW curves} in normed planes: Barbier's theorem fixing their circumference only by the diameter of the curve (cf. \cite{Petty} and \cite{Ma-Mu});
relations between curvature, evolutes, involutes, and equidistants
(see \cite{Tabach97} and, for applications of Minkowskian evolutes in computer graphics, \cite{Ait-Haddou00}); mixed areas, and the relation between the area and length
of a CW curve cut off along a diameter (see \cite{Chakerian66}, (2.1)). In this paper we prove corresponding results for \emph{CW polygons}.
We note that our results
are direct discretizations of the corresponding results for the smooth case, where the derivatives and integrals are replaced by differences and sums.
It is meant in this sense that the results of this paper can be considered as one of the first contributions to Discrete Differential Geometry in the framework of normed planes.

Among the $U$-equidistants of a smooth CW curve $\gamma$, there is a particular one called {\it central equidistant}. The central equidistant of $\gamma$ coincides with its {\it area evolute}, while the evolute of $\gamma$ coincides with its {\it center symmetry set} (see \cite{Craizer14} and \cite{Giblin08}). We show that for a CW polygon $P$ the same results hold: The central equidistant $M$ coincides with the area evolute, and the evolute $E$ coincides with the central symmetry set (see \cite{Craizer13}).
Since the equidistants of $P$ are the involutes of $E$, we shall choose the central equidistant as a representative of them, and we write $M=Inv(E)$.

For a Minkowski plane whose  unit ball $U$ is a centered convex (2n)-gon, the \emph{dual unit ball} $V$ is also a centered convex (2n)-gon
with diagonals parallel to the sides of $U$, and the sides parallel to diagonals of $U$. As in the smooth case (cf. \cite{Craizer14}), the involutes of the central equidistant  of $P$ form a
one-parameter family
of polygons having constant $V$-width. This one-parameter family consists of the $V$-equidistants of any of its members, and we shall choose the central equidistant $N$ as its representative.
Thus we write $N=Inv(M)$.
In \cite{Craizer14} it is proved that,  for smooth curves, the analogous 
involute
$N$ is contained in the region bounded by $M$ and has smaller or equal signed area.
In this paper we prove the corresponding fact for polygons, namely, that $N$ is contained in the region bounded by $M$ and the signed area of $N$ is not larger than the signed area of $M$.

What happens if we iterate the involutes? Let $N(0)=E$, $M(0)=M$, $N(1)=N$ and define $M(k)=Inv(N(k))$, $N(k+1)=Inv(M(k))$. Then we obtain two 
sequences
$M(k)$ and $N(k)$, the first being of constant $U$-width and the
second of constant $V$-width. Moreover, we have
$$
\overline{N(0)}\supset\overline {M(0)}\supset \overline {N(1)}\supset \overline {M(1)}\supset ...\,,
$$
where ${\overline R}$ denotes the closure of the region bounded by $R$.
Denoting by $O=O(P)$ the intersection of all these sets, we shall prove that $O$ is in fact a single point.
Another form of describing the convergence of $M(k)$ and $N(k)$ to $O$ is as follows: For fixed $c$ and $d$, consider the sequences $M(k)+cU$ of polygons of constant $U$-width, and the sequences $N(k)+dV$
of polygons of constant $V$-width. Then these sequences are converging to $O+cU$ and $O+dV$, respectively, which are $U$- and $V$-balls centered at $O$.
For smooth curves the analogous results were proved in \cite{Craizer14}.

Our paper is organized as follows: In Section 2 we describe geometrically the unit ball of a Minkowski plane for which a given convex polygon has constant Minkowskian width. In Section 3, we define Minkowskian curvature, evolutes and involutes for
CW polygons and prove many properties of them. In Section 4 we consider the involute of the central equidistant, and in Section 5 we prove that the involutes iterates are converging to a single point.

\section{Polygonal Minkowskian balls, their duals, and constant Minkowskian width}

Since faces and also width functions of convex sets behave additively under (vector or) Minkowski addition, it is clear that a polygon $P$ is of constant Minkowskian width if and only if $P+(-P)$ is a
homothetical copy of the unit ball $U$ of the respective normed plane; see, e.g., \S\S~2.3 of \cite{Ma-Sw}.
If, moreover, the homothety of $U$ and  $P + (-P)$ is only possible when $P$ itself is already centrally symmetric, then the only sets of constant $U$-width are the balls of that norm; cf., e.g., \cite{Yost}.
In the following we will have a closer look at various geometric relations between polygons $P$ of constant $U$-width and the unit ball $U$, since we
need them later.

Thus, let $P$ be an arbitrary planar convex polygon. By an abuse of notation, we shall denote by the same letter $P$ also the set of vertices of the polygon, the closed polygonal arc formed by the union of
its sides, and the convex region bounded by $P$.

\subsection{A centered polygon with parallel sides and diagonals}

Assume that $P=\{P_1,...,P_{2n}\}$ is a planar convex polygon with parallel opposite sides, i.e., the segments $P_iP_{i+1}$ and $P_{i+n}P_{i+n+1}$, $1\leq i\leq n$, are parallel.

\begin{lemma}\label{lemma:SymmetricBall}
Fix an origin $Z$ and take $U_1$ such that $U_1-Z=\frac{1}{2a}\left(P_{1}-P_{1+n}\right)$, for some $a>0$. Consider the polygon $U$ whose vertices are
\begin{equation}\label{eq:defineU}
U_i=Z+\frac{1}{2a}\left(  P_i-P_{i+n} \right),
\end{equation}
$1\leq i\leq 2n$. Then $U$ is convex, symmetric with respect to $Z$, 
$U_{i+1}-U_i\parallel P_{i+1}-P_i$ and $U_i-Z\parallel P_{i}-P_{i+n}$ for $1\leq i\leq n$ (see Figure \ref{fig:HexagonSym}). Moreover, $U$ is the unique polygon
with these properties.
\end{lemma}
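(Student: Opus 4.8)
The plan is to verify each asserted property of $U$ directly from the defining formula \eqref{eq:defineU}, and then to argue uniqueness by counting how many constraints are imposed.

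First I would establish the \emph{central symmetry}. Since $P$ has parallel opposite sides, the difference $P_i - P_{i+n}$ behaves nicely under the index shift $i \mapsto i+n$: we have $U_{i+n} - Z = \frac{1}{2a}(P_{i+n} - P_{i+2n}) = \frac{1}{2a}(P_{i+n} - P_i) = -(U_i - Z)$, so $U$ is symmetric with respect to $Z$. Next, the two parallelism claims: $U_{i+1} - U_i = \frac{1}{2a}\bigl((P_{i+1} - P_i) - (P_{i+n+1} - P_{i+n})\bigr)$, and the parallel-opposite-sides hypothesis says $P_{i+n+1} - P_{i+n}$ is a scalar multiple of $P_{i+1} - P_i$, so the whole expression is parallel to $P_{i+1} - P_i$. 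The relation $U_i - Z \parallel P_i - P_{i+n}$ is immediate from \eqref{eq:defineU}.

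Then I would prove \emph{convexity}. The cleanest route: $U$ is (a translate of) $\frac{1}{2a}$ times the Minkowski sum $P + (-P)$, because the vertices of $P + (-P)$, traversed in order, are exactly the points $P_i - P_{i+n}$ (the support lines of $P$ and $-P$ in a given direction are attained at $P_i$ and $-P_{i+n}$ respectively, using that opposite sides are parallel). A Minkowski sum of convex sets is convex, so $U$ is convex; alternatively one checks directly that the turning angles $U_{i+1}-U_i$ cycle monotonically through all directions, which follows because the directions of $P_{i+1}-P_i$ do. I expect the small technical point here — matching the vertex list of $P+(-P)$ with the points $P_i - P_{i+n}$ and checking no degeneracies (collinear triples) arise — to be the main obstacle, though it is routine given the hypothesis on $P$.

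Finally, \emph{uniqueness}. Suppose $U'$ is another centered polygon with $U'_{i+1} - U'_i \parallel P_{i+1} - P_i$ and $U'_i - Z \parallel P_i - P_{i+n}$. The second condition pins down each vertex $U'_i$ to a fixed line through $Z$; the first condition says consecutive vertices lie on lines of prescribed direction. Walking around the polygon, once $U'_1$ is chosen on its line (say $U'_1 = Z + t(P_1 - P_{1+n})$ for some scalar $t$), each subsequent vertex $U'_{i+1}$ is the unique intersection of the line through $U'_i$ in direction $P_{i+1}-P_i$ with the line through $Z$ in direction $P_{i+1}-P_{i+1+n}$; these intersections are transversal because $P$ is a genuine convex polygon, so no two consecutive edge directions are parallel. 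Hence $U'$ is determined by the single scalar $t$, i.e., $U'$ is a homothet of $U$ centered at $Z$. The normalization $U'_1 - Z = \frac{1}{2a}(P_1 - P_{1+n})$ then forces $t = \frac{1}{2a}$ and $U' = U$.
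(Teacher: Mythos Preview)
Your proof is correct and follows essentially the same route as the paper's: direct verification of symmetry and the two parallelism conditions from the defining formula, convexity from the fact that the edge vectors of $U$ inherit the cyclic ordering of those of $P$ (the paper phrases this as ``$U_{i+1}-U_i$ has the same orientation as $P_{i+1}-P_i$''; your Minkowski-sum argument is an equivalent alternative, and you also mention the turning-angle version), and uniqueness by the same inductive intersection construction starting from $U_1$. The only cosmetic difference is that the paper obtains $U_{n+1},\dots,U_{2n}$ by reflection rather than continuing the induction, but the content is identical.
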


\begin{figure}[htb]
 \centering
 \includegraphics[width=0.90\linewidth]{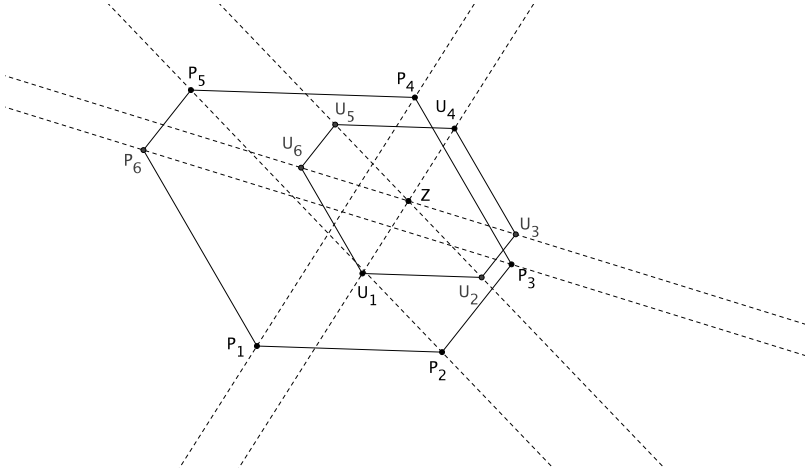}
 \caption{ A hexagon $P$ with parallel opposite sides and  the corresponding homothet $U$ of $P+(-P)$. }
\label{fig:HexagonSym}
\end{figure}

\begin{proof}
It is clear that $U$ is symmetric with respect to $Z$, $U_{i+1}-U_i\parallel P_{i+1}-P_i$ and $U_i-Z\parallel P_{i}-P_{i+n}$ for $1\leq i\leq n$. Moreover $U_{i+1}-U_i$ has the same orientation as $P_{i+1}-P_i$, which implies that $U$ is convex. 

To prove the uniqueness of $U$, observe that the point $U_2$ is obtained as the intersection of the lines parallel to $P_1P_2$ through $U_1$ and parallel to $P_2P_{2+n}$ through $Z$. 
The points $U_3,...,U_n$ are obtained inductively in a similar way, while $U_{n+1},..,U_{2n}$ are reflections of $U_1,...U_n$ with respect to $Z$. 
\end{proof}

Consider now a convex polygon $\tilde{P}=\{\tilde{P}_1,...,\tilde{P}_k\}$ that has not necessarily all opposite sides parallel. Suppose that exactly $0\leq j\leq \frac{k}{2}$ pairs are parallel. Our next 
lemma shows that the list of vertices of this polygon can be re-written as  $P=\{P_1,P_2,..,P_{2n}\}$, $n=k-j$, with "parallel opposite sides" in a broader sense.

\begin{lemma}\label{lemma:Reorder}
We may re-write the list of vertices of $\tilde{P}$ as $\{P_1,P_2,..,P_{2n}\}$ such that, for each $1\leq i\leq n$, 
$P_{i}P_{i+1}$ is parallel to $P_{i+n}P_{i+n+1}$ or else one of these sides, say $P_{i+n}P_{i+n+1}$, degenerates to a point, in which case the other side $P_iP_{i+1}$ 
is not degenerated and the line through $P_{i+n}=P_{i+n+1}$ parallel to $P_iP_{i+1}$ is outside $P$ (see Figure \ref{fig:QuadrangleSym2}). 
\end{lemma}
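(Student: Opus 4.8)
The plan is to pass to the outer normal directions of the sides of $\tilde P$ and to ``symmetrize'' the finite set they form. Since $\tilde P$ is convex I would list its sides $e_1,\dots,e_k$ in counterclockwise order, with outer unit normals $\nu_1,\dots,\nu_k\in S^1$ occurring in the same cyclic order and pairwise distinct (the face of a convex polygon with a prescribed outer normal is a single side or a single vertex, so no two sides can share a normal). Two sides form one of the $j$ parallel pairs exactly when their normals are antipodal; hence the $k-2j$ remaining \emph{lonely} sides are precisely those whose normal $\nu$ satisfies $-\nu\notin\{\nu_1,\dots,\nu_k\}$. Set $D:=\{\pm\nu_1,\dots,\pm\nu_k\}$. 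Then $D=-D$, and a direct count (the $j$ parallel pairs contribute $2j$ elements, each lonely side contributes its normal and the new antipodal one) gives $|D|=2j+2(k-2j)=2(k-j)=2n$. Writing $D=\{\mu_1,\dots,\mu_{2n}\}$ in counterclockwise cyclic order, the antipodal map is a fixed-point-free, orientation-preserving involution of this cyclically ordered $2n$-element set, so it must be the shift by $n$; i.e. $\mu_{i+n}=-\mu_i$ for all $i$ (indices mod $2n$).

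Next I would build $P$ by attaching to each $\mu_i$ a possibly degenerate side $f_i$. If $\mu_i=\nu_a$ for a side $e_a$ of $\tilde P$, put $f_i:=e_a$. If $\mu_i$ is not a normal of $\tilde P$ --- equivalently $\mu_i=-\nu_a$ for a lonely side $e_a$ --- then the face of $\tilde P$ in direction $\mu_i$ is a single vertex $Q_i$ of $\tilde P$, and I let $f_i$ be the degenerate side $\{Q_i\}$, i.e. I insert a repeated vertex $P_i=P_{i+1}=Q_i$. Reading the $f_i$ around in the cyclic order of $D$ produces a vertex list $P_1,\dots,P_{2n}$; I would then check that this is nothing but the boundary cycle of $\tilde P$ with the support vertices $Q_i$ marked as zero-length sides, using that along $\partial\tilde P$ the outer normal turns monotonically --- sweeping only the single value $\nu_a$ across each side $e_a$ and a whole arc of directions at each vertex --- so that $\mu_i=-\nu_a$ lies in the interior of the normal arc of exactly the vertex at which $\tilde P$ is extreme in direction $-\nu_a$. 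In particular $P$ bounds the same convex region as $\tilde P$, hence is convex, and it has $2n$ vertices with $n=k-j$ as required.

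It remains to check the dichotomy for each $1\le i\le n$. The sides $f_i$ and $f_{i+n}$ carry the antipodal normals $\mu_i$ and $-\mu_i$. If both are genuine sides of $\tilde P$ they are parallel, and this recovers exactly the $j$ parallel pairs. They cannot both be degenerate, since a degenerate $f_i$ forces $\mu_i=-\nu_a$ with $\nu_a$ a normal of $\tilde P$, whence $-\mu_i=\nu_a$ \emph{is} a normal of $\tilde P$ and $f_{i+n}$ is genuine. In the remaining case, say $f_i=e_a$ is a genuine --- hence non-degenerate --- side with lonely normal $\nu_a$ while $f_{i+n}=\{Q_{i+n}\}$ with $Q_{i+n}$ the unique vertex of $\tilde P$ extreme in direction $-\nu_a$: then the line through $P_{i+n}=P_{i+n+1}=Q_{i+n}$ parallel to $P_iP_{i+1}=e_a$ is the supporting line of $\tilde P$ with outer normal $-\nu_a$, so it meets $\tilde P$ only in $Q_{i+n}$ and otherwise lies outside $P$. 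This is exactly the alternative asserted in the statement.

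The steps I expect to be delicate are the identity $\mu_{i+n}=-\mu_i$, which rests on the elementary fact that a fixed-point-free orientation-preserving involution of a cyclic group of order $2n$ is translation by $n$, and the bookkeeping showing that inserting the degenerate vertices at the support points $Q_i$ reproduces $\partial\tilde P$ with the correct cyclic order; everything else --- the parallel/degenerate dichotomy and the ``supporting line lies outside $P$'' claim --- then follows from the definition of $Q_i$ as a support point. One could alternatively organize the argument as an induction on the number $k-2j$ of lonely sides, the base case $k=2j$ being the genuine ``parallel opposite sides'' situation handled by the same involution argument, but the direct construction above seems cleaner.
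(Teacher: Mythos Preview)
Your argument is correct and rests on the same underlying idea as the paper's proof: symmetrize the set of edge directions and insert degenerate (zero-length) sides where a direction is only realized once. The paper phrases this with the $n=k-j$ \emph{unoriented} edge directions $\theta_1,\dots,\theta_n$ and builds $P$ by an inductive walk around $\partial\tilde P$, repeating the current vertex whenever the next edge of $\tilde P$ fails to have direction $\theta_i$; it then leaves the verification of the stated dichotomy to the reader. You instead work with the $2n$ \emph{oriented} outer normals, invoke the cyclic-involution observation to get $\mu_{i+n}=-\mu_i$, and read off each $f_i$ as the face of $\tilde P$ in direction $\mu_i$. The two constructions produce the same polygon; your support-face description has the advantage that the ``line through $P_{i+n}=P_{i+n+1}$ parallel to $P_iP_{i+1}$ lies outside $P$'' clause becomes an immediate consequence of $Q_{i+n}$ being the unique support point in direction $-\nu_a$, whereas the paper's sequential construction does not make this explicit. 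Either way the content is the same.
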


\begin{proof}
The polygon $\tilde{P}=\{\tilde{P}_1,...,\tilde{P}_k\}$ defines exactly $n=k-j$ directions  $\theta_1,..., \theta_n$, in increasing order, in the plane. We may assume that $\tilde{P}_1\tilde{P}_2$ is in direction $\theta_1$ and define $P_1=\tilde{P}_1$, $P_2=\tilde{P}_2$. For the induction step write $P_i=\tilde{P}_{l}$. If $P_i\tilde{P}_{l+1}$ is in direction $\theta_i$, define $P_{i+1}=\tilde{P}_{l+1}$, otherwise define $P_{i+1}=\tilde{P}_l$. It is now easy to verify that the polygon $P=\{P_1,P_2,..,P_{2n}\}$ satisfies the properties of the lemma.
\end{proof}

The construction of Lemma \ref{lemma:SymmetricBall} can be applied to the polygon $P$ obtained in Lemma \ref{lemma:Reorder} 
(see Figure \ref{fig:QuadrangleSym2}). 
If, for example, $P$ is a triangle, then $P+(-P)$ is an affinely regular hexagon (see Figure \ref{fig:Triangle}). From now on, we shall assume that $Z$ coincides with the origin of $\R^2$ and that $P=\{P_1,...,P_{2n}\}$, with $P_iP_{i+1}$ parallel to $U_iU_{i+1}$.

\begin{figure}[htb]
 \centering
 \includegraphics[width=0.70\linewidth]{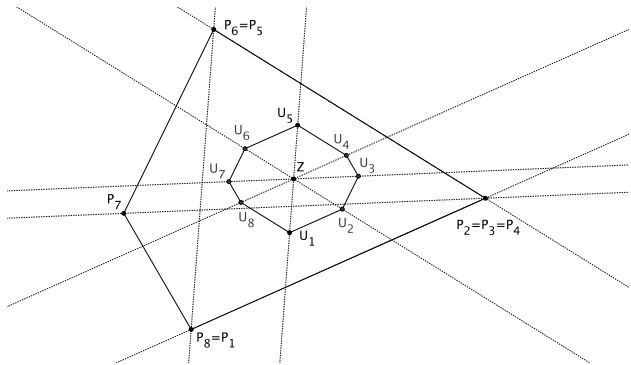}
 \caption{ A quadrangle and the corresponding symmetric octagon. }
\label{fig:QuadrangleSym2}
\end{figure}

\begin{figure}[htb]
 \centering
 \includegraphics[width=0.70\linewidth]{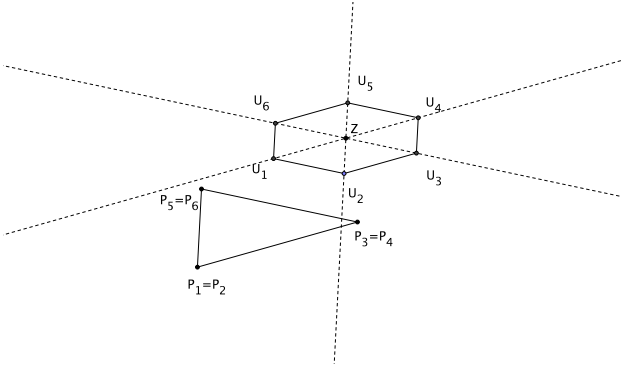}
 \caption{ When $P$ is a triangle of constant $U$-width, then $U$ is an affinely regular hexagon. }
\label{fig:Triangle}
\end{figure}

\subsection{The dual Minkowskian ball}

Now we introduce the type of duality which is very useful for our investigations.
Let $(\R^2)^*$ denote the space of linear functionals in $\R^2$. The dual norm in $(\R^2)^*$ is defined as
$$
|| f ||=\sup\{f(u), u\in U\}.
$$
We shall identify $(\R^2)^*$ with $\R^2$ by $f(\cdot)=[\cdot,v]$, where $[\cdot,\cdot]$ denotes the determinant of a pair of planar vectors.
Under this identification, the dual norm in $\R^2$ is given by
$$
|| v ||=\sup\{[u,v], u\in U\}.
$$
We shall construct below a centered polygon $V$ such that, for $v$ in any side of $V$, we have $||v||=1$. Such a polygon defines a Minkowski norm equivalent to the dual norm of $U$.

Now assume that the unit ball $U$ is a centered polygon with vertices $\{U_1,...,U_{2n}\}$, $U_{i+n}=-U_i, \ 1\leq i\leq n$. Define the polygon $V$ with vertices
\begin{equation*}
V_{i+\frac{1}{2}}=\frac{U_{i+1}-U_i}{[U_i,U_{i+1}]}.
\end{equation*}
Observe that $V_{i+n+\frac{1}{2}}=-V_{i+\frac{1}{2}}$, i.e., $V$ is centered.
Now $[V_{i+\frac{1}{2}}-V_{i-\frac{1}{2}},U_i]=0$, which implies that $V_{i+\frac{1}{2}}-V_{i-\frac{1}{2}}=-aU_i$. Multiplying both sides by $V_{i+\frac{1}{2}}$ we obtain
\begin{equation*}
U_i=-\frac{V_{i+\frac{1}{2}}-V_{i-\frac{1}{2}}}{[V_{i-\frac{1}{2}},V_{i+\frac{1}{2}}]}, 
\end{equation*}
for $1\leq i\leq 2n$.

\begin{figure}[htb]
 \centering
 \includegraphics[width=0.90\linewidth]{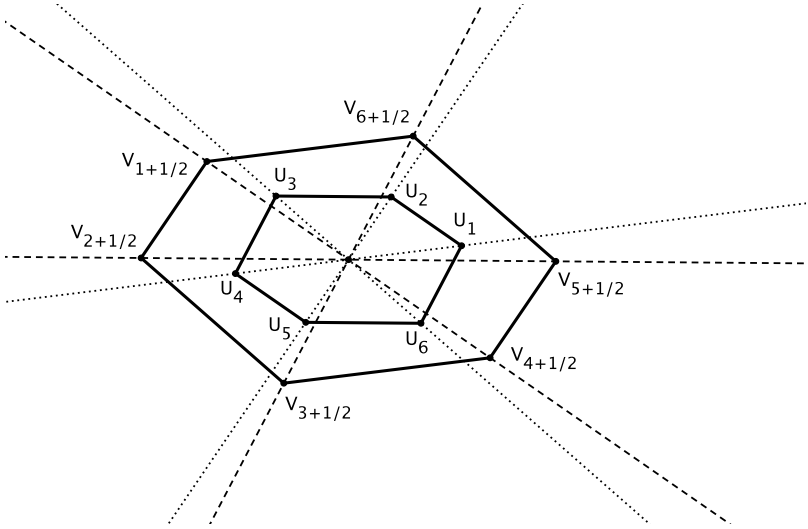}
 \caption{ The centered hexagon $U$ and its dual $V$. }
\label{fig:HexagonDuals}
\end{figure}

\begin{lemma}\label{lemma:DualBall}
The polygon $V$ is the dual unit ball.
\end{lemma}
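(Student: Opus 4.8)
The plan is to verify directly that the centered polygon $V$ constructed above is the unit ball of the dual norm, i.e., that $\|v\| = \sup\{[u,v] : u\in U\} = 1$ precisely when $v$ lies on a side of $V$, and that $\|v\|\le 1$ on all of $V$. Since $U$ is a centered convex polygon with vertices $U_1,\dots,U_{2n}$ in convex position, the supremum $\sup\{[u,v] : u\in U\}$ over the compact convex set $U$ is attained at a vertex $U_i$ (for generic $v$) or along an entire side $U_iU_{i+1}$ (when $v$ is conormal to that side). So the first step is to record the elementary fact that for $v$ in the side $V_{i-\frac12}V_{i+\frac12}$ of $V$, the linear functional $u\mapsto [u,v]$ is maximized over $U$ at the vertex $U_i$.

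To see this, I would use the two relations derived just before the lemma. By construction $V_{i+\frac12}-V_{i-\frac12} = -aU_i$ for a suitable scalar (in fact $a = [V_{i-\frac12},V_{i+\frac12}]$, which is positive by the convex ordering of the $V_j$), so $U_i$ is parallel to the side $V_{i-\frac12}V_{i+\frac12}$; hence $[U_i, v]$ is constant as $v$ ranges over that side, and equals $[U_i, V_{i+\frac12}]$. Now compute $[U_i,V_{i+\frac12}] = \left[U_i, \frac{U_{i+1}-U_i}{[U_i,U_{i+1}]}\right] = \frac{[U_i,U_{i+1}]}{[U_i,U_{i+1}]} = 1$, and likewise $[U_i,V_{i-\frac12}] = \left[U_i, \frac{U_i - U_{i-1}}{[U_{i-1},U_i]}\right] = \frac{[U_i,U_i]-[U_i,U_{i-1}]}{[U_{i-1},U_i]} = 1$. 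Thus $[U_i,v] = 1$ for every $v$ on the side $V_{i-\frac12}V_{i+\frac12}$.

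It remains to check that $U_i$ really is the maximizing vertex, i.e., $[U_j, v]\le 1$ for all other vertices $U_j$ and all $v$ on that side of $V$; equivalently, by convexity of $U$, that $[U_{i-1},v]\le 1$ and $[U_{i+1},v]\le 1$ (the neighbouring vertices control the rest, since $u\mapsto[u,v]$ is linear and $U$ is convex, so its maximum over $U$ is attained at the vertex whose two incident edges straddle the direction orthogonal to $v$). For $v = V_{i+\frac12} = \frac{U_{i+1}-U_i}{[U_i,U_{i+1}]}$ one gets $[U_{i+1}, v] = \frac{[U_{i+1},U_{i+1}]-[U_{i+1},U_i]}{[U_i,U_{i+1}]} = \frac{[U_i,U_{i+1}]}{[U_i,U_{i+1}]}\cdot\frac{[U_{i+1},U_i]}{[U_i,U_{i+1}]}$... — more cleanly, $[U_{i+1},v] = \frac{-[U_{i+1},U_i]}{[U_i,U_{i+1}]} = 1$ as well, reflecting that the side $U_iU_{i+1}$ of $U$ is dual to the \emph{vertex} $V_{i+\frac12}$ of $V$; and the strict inequality $[U_j,v] < 1$ for $j\neq i,i+1$ follows from the strict convexity of the vertex sequence of $U$ (equivalently, from $[U_i,U_{i+1}]>0$ for all $i$ together with the ordering of slopes). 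Combining: on each closed side of $V$ the dual norm equals $1$, and by positive homogeneity and convexity $\|v\|\le 1$ throughout $V$ with equality exactly on $\partial V$, so $V$ is the dual unit ball.

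The main obstacle is the bookkeeping in the last step — making precise, with the determinant sign conventions, that the maximum of $[\cdot,v]$ over $U$ is attained at $U_i$ (and only there, for $v$ in the relative interior of the side) rather than at some far-away vertex. This is where one must invoke the cyclic monotonicity of the directions $U_{i+1}-U_i$ (established in Lemma \ref{lemma:SymmetricBall}, since $U$ is a genuine convex polygon) to guarantee that the "local" check at $U_{i-1}, U_{i+1}$ suffices; everything else is the short determinant computation above.
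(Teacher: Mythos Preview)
Your proposal is correct and follows essentially the same route as the paper: both verify that $[U_i,V_{i\pm\frac12}]=1$ by direct computation with the defining formula for $V_{i+\frac12}$, and then appeal to the convexity of $U$ to conclude that $[U_j,v]\le 1$ for the remaining vertices, so that $\|v\|=1$ on $\partial V$. The paper organizes the check as ``vertices of $V$, then sides of $V$'' while you run it as ``sides of $V$, picking up the vertex case en route,'' but the determinant identities and the convexity step you flag as the bookkeeping obstacle are exactly what the paper uses (and glosses over to the same extent).
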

\begin{proof}
We have that, for $1\leq i\leq 2n$,
\begin{equation}
[tU_i+(1-t)U_{i+1},V_{i+\frac{1}{2}}]=1,
\end{equation}
for any $t\in\R$ and for $j\notin \{ i,i+1\}$, $[U_j, V_{i+\frac{1}{2}}]\leq1$.  This implies that the vertex $V_{i+\frac{1}{2}}$ is from the dual unit circle.
Moreover,
\begin{equation}
[U_i,tV_{i-\frac{1}{2}}+(1-t)V_{i+\frac{1}{2}}]=1,
\end{equation}
and for $j\neq i$ we have $[U_j,tV_{i-\frac{1}{2}}+(1-t)V_{i+\frac{1}{2}}]\leq 1$, which implies that also the side $tV_{i-\frac{1}{2}}+(1-t)V_{i+\frac{1}{2}}$ is from the dual unit circle.
\end{proof}

\subsection{Polygons of constant Minkowskian width}

Consider a Minkowski plane $(\R^2,U)$, and let $P$ be a convex curve. For $f$ in the dual unit ball, the {\it support function} $h(P)(f)$ of $P$ at $f$ is defined as
\begin{equation}
h(P)(f)=\sup\{f(p), p\in P\}.
\end{equation}
The {\it width} of $P$ in the direction $f$ is defined as $w(P)(f)=h(P)(f)+h(P)(-f)$. We say that $P$ is of {\it constant Minkowskian width} if $w(P)(f)$ does not depend on $f$.

Consider now a Minkowski plane whose unit ball $U$ is a centered polygon, and let $P$ be a polygon with parallel corresponding sides and diagonals.

\begin{lemma}
In the Minkowski plane $(\R^2,U)$, $P$ has constant $U$-width.
\end{lemma}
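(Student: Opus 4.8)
The plan is to express the width of $P$ in an arbitrary direction $f$ from the dual unit circle, and to show it equals the constant $2a$ appearing in Lemma \ref{lemma:SymmetricBall}. Since $V$ is the dual unit ball by Lemma \ref{lemma:DualBall}, any $f$ on the dual unit circle lies on some side of $V$, i.e.\ $f=tV_{i-\frac12}+(1-t)V_{i+\frac12}$ with $t\in[0,1]$, or $f$ is a vertex $V_{i+\frac12}$. By the linearity of $p\mapsto f(p)=[p,f]$ and of $f$, it suffices to check $w(P)(f)$ at the vertices $f=V_{i+\frac12}$; the value at a side will then be an affine interpolation of two equal numbers.

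First I would compute, for $f=V_{i+\frac12}=\dfrac{U_{i+1}-U_i}{[U_i,U_{i+1}]}$, which vertex of $P$ realizes $h(P)(f)=\sup_p[p,f]$. Because $P$ has sides parallel to the corresponding sides of $U$ (our standing assumption $P_iP_{i+1}\parallel U_iU_{i+1}$), the functional $[\,\cdot\,,V_{i+\frac12}]$ is constant along the edge $P_iP_{i+1}$ and strictly smaller at all other vertices — this is the exact analogue of the computation in the proof of Lemma \ref{lemma:DualBall}, now transported from $U$ to $P$. Hence $h(P)(V_{i+\frac12})=[P_i,V_{i+\frac12}]=[P_{i+1},V_{i+\frac12}]$. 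Similarly, since $f=-V_{i+\frac12}=V_{i+n+\frac12}$ is supported along the opposite edge $P_{i+n}P_{i+n+1}$, we get $h(P)(-V_{i+\frac12})=[P_{i+n},-V_{i+\frac12}]=-[P_{i+n},V_{i+\frac12}]$.

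Therefore
\begin{equation*}
w(P)(V_{i+\tfrac12})=h(P)(V_{i+\tfrac12})+h(P)(-V_{i+\tfrac12})=[P_i-P_{i+n},V_{i+\tfrac12}].
\end{equation*}
Now substitute $P_i-P_{i+n}=2a\,U_i$, which is exactly \eqref{eq:defineU} (with $Z$ the origin), and use $[U_i,V_{i+\frac12}]=[U_i,\frac{U_{i+1}-U_i}{[U_i,U_{i+1}]}]=1$ from the proof of Lemma \ref{lemma:DualBall}. This gives $w(P)(V_{i+\frac12})=2a$ for every $i$, a value independent of $i$. By the affine-interpolation remark above, $w(P)(f)=2a$ for every $f$ on the dual unit circle, so $P$ has constant $U$-width.

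\textbf{Main obstacle.} The one point needing care is the claim that $[\,\cdot\,,V_{i+\frac12}]$ is maximized over $P$ precisely on the edge $P_iP_{i+1}$ (and minimized on the opposite edge). In the degenerate case of Lemma \ref{lemma:Reorder}, where $P_{i+n}P_{i+n+1}$ collapses to a single point, one must check that this point still realizes the minimum of $[\,\cdot\,,V_{i+\frac12}]$; this is guaranteed by the condition, built into Lemma \ref{lemma:Reorder}, that the line through $P_{i+n}=P_{i+n+1}$ parallel to $P_iP_{i+1}$ lies outside $P$, i.e.\ is a supporting line. Granting this, the argument is a routine transcription of the duality computation already carried out for $U$.
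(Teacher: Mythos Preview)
Your proof is correct and follows essentially the same route as the paper's: both compute $w(P)(V_{i+\frac{1}{2}})=[P_i-P_{i+n},V_{i+\frac{1}{2}}]$ and then use $P_i-P_{i+n}=2aU_i$ together with $[U_i,V_{i+\frac{1}{2}}]=1$ to obtain the constant $2a$. Your version is in fact more careful than the paper's, which only checks the width at the vertices $V_{i+\frac{1}{2}}$ and does not spell out why the supremum is attained on the edge $P_iP_{i+1}$, whereas you justify this, treat the degenerate case from Lemma~\ref{lemma:Reorder}, and extend to all $f$ on the dual unit circle; one small remark is that your ``affine interpolation'' step works not because $h(P)$ is linear in $f$ globally, but because on each side $V_{i-\frac{1}{2}}V_{i+\frac{1}{2}}$ the maximizer is the fixed vertex $P_i$, making $h(P)$ affine there.
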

\begin{proof}
By Lemma \ref{lemma:SymmetricBall}, we have that $P_{i}-P_{i+n}=a(U_i-U_{i+n})$, for some constant $a$. Since
$$
w(P)(V_{i+\frac{1}{2}})=h(P)(V_{i+\frac{1}{2}})+h(P)(-V_{i+\frac{1}{2}})=[P_i-P_{i+n},V_{i+\frac{1}{2}}],
$$
we obtain
$$
w(P)(V_{i+\frac{1}{2}})=2a,
$$
$1\leq i\leq 2n$, thus proving the lemma.
\end{proof}

Our next corollary says that in fact $U$ is homothetic to the Minkowski sum $P+(-P)$ (see \cite{Thompson96}, Th. 4.2.3).
\begin{corollary}
Let $P$ be a convex planar polygon and let $U$ be as in Lemma \ref{lemma:SymmetricBall}. Then $U$ is homothetic to $P+(-P)$.
\end{corollary}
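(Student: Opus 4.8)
The plan is to show that the polygon $U$ constructed vertex-by-vertex in Lemma~\ref{lemma:SymmetricBall} is precisely a homothet of $P+(-P)$ by identifying the vertices of $P+(-P)$ explicitly. Recall that for convex polygons, the Minkowski sum $P+(-P)$ is again a convex polygon whose edge set is the union (sorted by direction) of the edges of $P$ and the edges of $-P$; since $P$ has parallel opposite sides $P_iP_{i+1}\parallel P_{i+n}P_{i+n+1}$, each edge direction of $P$ already occurs in $-P$, so $P+(-P)$ has exactly $2n$ edges, in the same $n$ directions as $P$, each appearing twice (once from $P$, once from $-P$), and these doubled parallel edges merge into single edges. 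So $P+(-P)$ is a centered $2n$-gon with edges parallel to those of $P$.

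Concretely, the vertex of $P+(-P)$ that is "extreme" between edge directions $\theta_{i-1}$ and $\theta_i$ is $P_i + (-P_{i+n}) = P_i - P_{i+n}$: indeed $P_i$ is the vertex of $P$ between directions $\theta_{i-1},\theta_i$, and $-P_{i+n}$ is the vertex of $-P$ between those same directions (since $P_{i+n}$ is the vertex of $P$ between directions $\theta_{i+n-1}=\theta_{i-1}+\pi$ and $\theta_{i+n}=\theta_i+\pi$, so $-P_{i+n}$ sits between $\theta_{i-1}$ and $\theta_i$). Hence the vertices of $P+(-P)$ are exactly $Q_i := P_i - P_{i+n}$, $1\le i\le 2n$. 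Comparing with~\eqref{eq:defineU}, we have $U_i - Z = \frac{1}{2a}Q_i$, i.e.\ $U = Z + \frac{1}{2a}\bigl(P+(-P)\bigr)$, which is a homothety with ratio $\frac{1}{2a}>0$ centered at an appropriate point. This is the claim.

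The steps, in order: first, recall/cite the standard fact (e.g.\ \cite{Thompson96}, as suggested, or a direct support-function argument) that the edges of $P+(-P)$ are the merged sorted union of edges of $P$ and $-P$, so that with parallel opposite sides $P+(-P)$ is a centered $2n$-gon with the same $n$ edge directions as $P$. Second, identify the vertices: the vertex of a Minkowski sum between two consecutive edge directions is the sum of the corresponding vertices of the summands, giving $Q_i = P_i - P_{i+n}$. Third, read off from~\eqref{eq:defineU} that $U_i = Z + \frac{1}{2a}Q_i$, so $U$ is the image of $P+(-P)$ under the homothety $x\mapsto Z + \frac{1}{2a}x$; conclude. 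Alternatively, one can bypass the explicit vertex computation: by Lemma~\ref{lemma:SymmetricBall}, $U$ is the \emph{unique} centered polygon with edges parallel to (and coherently oriented with) those of $P$ and with $U_i - Z \parallel P_i - P_{i+n}$; since $\frac{1}{2a}(P+(-P))$ (suitably translated to be centered) manifestly has all these properties, it must coincide with $U$.

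The main obstacle is purely bookkeeping: making the combinatorial description of the vertices of $P+(-P)$ precise in the presence of the reordering from Lemma~\ref{lemma:Reorder}, where some "opposite sides" are degenerate points. In that degenerate case $P_{i+n}=P_{i+n+1}$, the edge in direction $\theta_i$ comes only from $P$ (not from $-P$), and the matching "edge" of $-P$ has collapsed to the vertex $-P_{i+n}$; one must check that the formula $Q_i = P_i - P_{i+n}$ still correctly describes the vertices of $P+(-P)$ in this case. This follows because a degenerate edge contributes a genuine vertex (not an edge) to the sum, and the alignment of indices is exactly what Lemma~\ref{lemma:Reorder} was set up to guarantee. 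I would therefore lean on the uniqueness clause of Lemma~\ref{lemma:SymmetricBall} to keep this clean, rather than tracking vertices of $P+(-P)$ through all the degenerate cases by hand.
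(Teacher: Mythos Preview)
Your argument is correct, but it takes a different route from the paper's. The paper's proof is a one-liner via support functions: it invokes the preceding lemma (that $P$ has constant $U$-width $2a$) to write $h(P+(-P))=h(P)+h(-P)=2a=h(2aU)$, and concludes $P+(-P)=2aU$ from equality of support functions. Your approach instead computes the vertices of $P+(-P)$ directly from the combinatorics of Minkowski sums of polygons, obtaining $Q_i=P_i-P_{i+n}$, and then reads off the homothety from the defining formula \eqref{eq:defineU}. The paper's method is shorter and showcases how the corollary is really a reformulation of the constant-width lemma, but it depends on that lemma and on the dual ball $V$ having already been set up. Your method is more self-contained and elementary---it does not need the dual ball or the width computation at all---and it makes the homothety ratio $\tfrac{1}{2a}$ and center explicit; your suggested shortcut via the uniqueness clause of Lemma~\ref{lemma:SymmetricBall} is also a clean way to sidestep the degenerate-side bookkeeping from Lemma~\ref{lemma:Reorder}.
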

\begin{proof}
We have that $2a=h(P)+h(-P)=h(P+(-P))=h(2aU)$, which implies that $P+(-P)$ is homothetic to $U$.
\end{proof}

\begin{corollary}\label{cor:CWequivalence}
Consider a centered polygon $U$ and a polygon $P$ whose sides are parallel to the corresponding sides of $U$. The following statements are equivalent:
\begin{enumerate}
\item
$P$ has constant $U$-width.
\item
$P+(-P)$ is homothetic to $U$.
\item
The corresponding diagonals of $U$ and $P$ are parallel to each other. %as explained in \S\S~2.1.
\item
$P_{i}-P_{i+n}=2a(U_i-U_{i+n})$, $1\leq i\leq n$, for some constant $a$.
\end{enumerate}
\end{corollary}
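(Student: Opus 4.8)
The plan is to prove the cyclic chain $(1)\Rightarrow(2)\Rightarrow(4)\Rightarrow(3)\Rightarrow(1)$, the whole argument being powered by a single discrete telescoping identity for the main diagonals of $P$.

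First I would fix notation suited to the standing hypothesis. Since the sides of $P$ are parallel to the corresponding sides of $U$, write $P_{i+1}-P_i=p_i(U_{i+1}-U_i)$ with scalars $p_i\geq 0$ (all indices read modulo $2n$), and recall $U_{i+n}=-U_i$, so that $U_i-U_{i+n}=2U_i$ and $U_{i+n+1}-U_{i+n}=-(U_{i+1}-U_i)$. A one-line computation then gives
\[
(P_{i+1}-P_{i+n+1})-(P_i-P_{i+n})=(p_i+p_{i+n})(U_{i+1}-U_i),
\]
since $P_{i+n+1}-P_{i+n}=-p_{i+n}(U_{i+1}-U_i)$. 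I would also record the elementary fact that $U_i$ and $U_{i+1}$ are linearly independent: otherwise the side $U_iU_{i+1}$ would lie on a line through the origin, contradicting that the origin is interior to $U$.

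Now $(1)\Rightarrow(2)$: by additivity of the support function, $h(P+(-P))(f)=h(P)(f)+h(P)(-f)=w(P)(f)$, while $h(cU)(f)=c\,\|f\|$ equals the constant $c$ on the dual unit circle; so if $w(P)\equiv 2a$ there, then $P+(-P)$ and $2aU$ have the same support function on the dual unit circle, hence (by positive homogeneity) everywhere, hence $P+(-P)=2aU$. The same computation read backwards gives $(2)\Rightarrow(1)$. For $(2)\Rightarrow(4)$: from $P+(-P)=2aU$, comparing edge vectors — the edge of $P+(-P)$ parallel to $U_{i+1}-U_i$ is $(p_i+p_{i+n})(U_{i+1}-U_i)$, that of $2aU$ is $2a(U_{i+1}-U_i)$ — forces $p_i+p_{i+n}=2a$ for every $i$; feeding this into the telescoping identity shows that $i\mapsto (P_i-P_{i+n})-2aU_i$ is constant, and adding its values at $i$ and at $i+n$ (where $U_{i+n}=-U_i$ and $P_{i+2n}=P_i$) forces that constant to vanish, so $P_i-P_{i+n}=2aU_i=a(U_i-U_{i+n})$, which is $(4)$ up to the name of the constant. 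The implication $(4)\Rightarrow(3)$ is immediate, and $(3)\Rightarrow(1)$ is the Lemma preceding this Corollary, whose hypotheses hold because parallelism of the sides is assumed and parallelism of the diagonals is $(3)$; alternatively one verifies $(3)\Rightarrow(4)$ directly by writing $P_i-P_{i+n}=\lambda_iU_i$, inserting this into the identity to get $[\lambda_{i+1}-(p_i+p_{i+n})]U_{i+1}=[\lambda_i-(p_i+p_{i+n})]U_i$, and concluding $\lambda_{i+1}=\lambda_i$ for all $i$ by linear independence.

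The only step that is more than bookkeeping is $(2)\Rightarrow(4)$ (equivalently $(3)\Rightarrow(4)$): the passage from a pointwise homothety or parallelism statement to the \emph{uniformity} of the proportionality constant of the diagonals. That is precisely the role of the telescoping identity together with the linear independence of consecutive vertices of $U$; everything else reduces to routine manipulations with support functions and Minkowski sums.
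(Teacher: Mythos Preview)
Your proof is correct. The paper, however, supplies no proof of this corollary at all: it is stated as a summary of the equivalences already implicit in Lemma~\ref{lemma:SymmetricBall}, the lemma asserting that $P$ has constant $U$-width, and the corollary that $U$ is homothetic to $P+(-P)$, and is left without argument. Your cyclic chain $(1)\Rightarrow(2)\Rightarrow(4)\Rightarrow(3)\Rightarrow(1)$ makes those implications explicit. In particular, your telescoping identity combined with the linear independence of consecutive $U_i$ is a clean, self-contained substitute for what the paper would attribute to the uniqueness clause of Lemma~\ref{lemma:SymmetricBall} when establishing $(2)\Rightarrow(4)$ or $(3)\Rightarrow(4)$; the step $(3)\Rightarrow(1)$ is exactly the lemma you cite; and $(1)\Leftrightarrow(2)$ is the support-function identity underlying the preceding corollary. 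One minor point worth making explicit in your write-up: when you pass from ``$P+(-P)$ is homothetic to $U$'' to ``$P+(-P)=2aU$'', you are silently using that both sets are centrally symmetric about the origin, so any homothety between them has no translational part.
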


\section{Geometric properties of polygons of constant Minkowskian width}

Consider a convex polygon $P=\{P_1,...,P_{2n}\}$ with parallel opposite sides and let $U=\{U_1,...,U_{2n}\}$ be the symmetric polygon obtained
from $P$ by the construction of  Lemma \ref{lemma:SymmetricBall}. 

\subsection{Central Equidistant, $V$-length, and Barbier's theorem}

\paragraph{Central equidistant} Any equidistant can be written as $P_i(c)=P_i+cU_i$, $1\leq i\leq 2n$. If we take $c=-a$, we obtain
\begin{equation}\label{eq:defineCentral}
M_i=P_i+\frac{c}{2a}\left( P_{i}-P_{i+n} \right)=\frac{1}{2}\left(P_i+P_{i+n}\right), \  1\leq i\leq 2n, 
\end{equation}
called the {\it central equidistant} of $P$. It is characterized by the condition $M_i=M_{i+n}$ (see Figure \ref{fig:OctoEqui}). If we re-scale the one-parameter family of equidistants as
\begin{equation}\label{eq:REescalaEqui}
P_i(c)=M_i+cU_i,\  1\leq i\leq 2n, 
\end{equation}
we get that the $0$-equidistant is exactly the central equidistant.

A vertex $M_i$ of the central equidistant is called a {\it cusp} if $M_{i-1}$ and $M_{i+1}$ are in the same half-plane defined by the diagonal at $P_i$.
The central equidistant coincides with the {\it area evolute} of polygons defined in \cite{Craizer13}. There 
it is proved that it has an odd number of cusps, at least three (see Figures \ref{fig:OctoEqui} and \ref{fig:OctoEvolute}).

\begin{figure}[htb]
 \centering
 \includegraphics[width=0.90\linewidth]{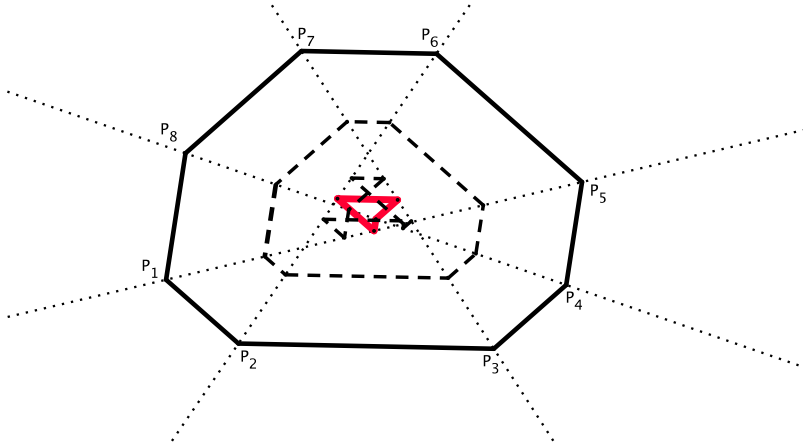}
 \caption{ The two traced octagons are ordinary equidistants. The thick quadrangle is the central equidistant. }
\label{fig:OctoEqui}
\end{figure}

\paragraph{$V$-Length}
Let $P$ be a polygonal arc whose sides are parallel to the corresponding ones of $U$. More precisely, we shall denote by
$\{P_s,...,P_t\}$ the vertices of $P$ and assume that $P_{i+1}-P_{i}$ is parallel to $V_{i+\frac{1}{2}}$.
We can write
\begin{equation}\label{eq:defineVlength}
P_{i+1}-P_i=\lambda_{i+\frac{1}{2}}V_{i+\frac{1}{2}}
\end{equation}
for some $\lambda_{i+\frac{1}{2}}\geq 0$. Then the {\it $V$-length} of the edge $P_iP_{i+1}$ is exactly $\lambda_{i+\frac{1}{2}}$, and we write
\begin{equation}\label{eq:defineVlength2}
L_V(P)=\sum_{i=s}^{t-1}\lambda_{i+\frac{1}{2}}.
\end{equation}

\paragraph{Barbier's theorem}
The classical Theorem of Barbier on curves of constant width in the Euclidean plane says that any such curve of diameter $d$ has circumference $d\pi$.
For Minkowski planes, it appears in \cite{Petty},
Th. 6.14(a), and
in \cite{Ma-Mu}. We prove here the version of this theorem for polygons.

Define $\alpha_{i+\frac{1}{2}}$, $1\leq i\leq 2n$, by the equation
\begin{equation}\label{eq:defineAlpha}
M_{i+1}-M_i=\alpha_{i+\frac{1}{2}} \left( U_{i+1}-U_i  \right)=\alpha_{i+\frac{1}{2}}[U_i,U_{i+1}]  V_{i+\frac{1}{2}}.
\end{equation}

\begin{Proposition}
Let $P(c)$ be defined by equation \eqref{eq:REescalaEqui}. Then
the $V$-length of $P(c)$ is
\begin{equation}\label{eq:Barbier}
L_V(P)=2cA(U),
\end{equation}
where $A(U)$ denotes the area of the polygon $U$.
\end{Proposition}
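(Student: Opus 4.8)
The plan is to evaluate the $V$-length of $P(c)$ edge by edge and then sum. First I would substitute $P_i(c)=M_i+cU_i$ from \eqref{eq:REescalaEqui} to get, for $1\le i\le 2n$,
$$
P_{i+1}(c)-P_i(c)=(M_{i+1}-M_i)+c(U_{i+1}-U_i).
$$
By the definition \eqref{eq:defineAlpha} of $\alpha_{i+\frac12}$, the first summand equals $\alpha_{i+\frac12}(U_{i+1}-U_i)$, so the edge vector is
$$
(\alpha_{i+\frac12}+c)(U_{i+1}-U_i)=(\alpha_{i+\frac12}+c)[U_i,U_{i+1}]\,V_{i+\frac12}.
$$
Since $Z$ is the center of $U$ we have $[U_i,U_{i+1}]>0$, so as long as $c$ is large enough that $P(c)$ is a genuine convex equidistant (i.e. $\alpha_{i+\frac12}+c\ge 0$ for all $i$, which is the regime in which the statement is meant), the coefficient of $V_{i+\frac12}$ is exactly the $V$-length of the edge $P_i(c)P_{i+1}(c)$, namely $\lambda_{i+\frac12}=(\alpha_{i+\frac12}+c)[U_i,U_{i+1}]$.

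Next I would sum over $i=1,\dots,2n$ and split:
$$
L_V(P(c))=\sum_{i=1}^{2n}\alpha_{i+\frac12}[U_i,U_{i+1}]+c\sum_{i=1}^{2n}[U_i,U_{i+1}].
$$
The second sum is $2A(U)$ by the Gauss (shoelace) area formula applied to the centered polygon $U$. Hence it only remains to show that the first sum vanishes.

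For this I would use the characterizing property $M_i=M_{i+n}$ of the central equidistant. It gives $M_{i+1}-M_i=M_{i+n+1}-M_{i+n}$, and since $U_{i+n}=-U_i$ we have $U_{i+n+1}-U_{i+n}=-(U_{i+1}-U_i)$; comparing both with \eqref{eq:defineAlpha} forces $\alpha_{i+n+\frac12}=-\alpha_{i+\frac12}$. Together with $[U_{i+n},U_{i+n+1}]=[-U_i,-U_{i+1}]=[U_i,U_{i+1}]$, the terms of index $i$ and $i+n$ cancel in pairs, so $\sum_{i=1}^{2n}\alpha_{i+\frac12}[U_i,U_{i+1}]=0$, whence $L_V(P(c))=2cA(U)$. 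The whole argument is a direct discretization of the smooth proof of Barbier's theorem, with the cancellation of the $\alpha$-sum playing the role of the vanishing of the integral of the derivative of the support function. The one point that requires a little care — and the closest thing to an obstacle here — is the range of $c$ for which $\lambda_{i+\frac12}\ge 0$, i.e. for which $(\alpha_{i+\frac12}+c)[U_i,U_{i+1}]$ is literally the $V$-length rather than a signed variant of it.
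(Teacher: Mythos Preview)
Your argument is correct and is essentially the paper's own proof, only spelled out in more detail: the paper writes $L_V(P(c))=\sum_{i=1}^{2n}(\alpha_{i+\frac12}+c)[U_i,U_{i+1}]$, invokes $\alpha_{i+n+\frac12}=-\alpha_{i+\frac12}$ to kill the $\alpha$-part, and identifies the remaining sum with $2cA(U)$. Your caveat about the sign of $\lambda_{i+\frac12}$ is also anticipated there, since the paper remarks right after the proof that the identity persists for all $c$ if one allows signed lengths.
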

\begin{proof}
The $V$-length of the polygon $P(c)$ is given by
$$
L_V(P(c))=\sum_{i=1}^{2n} (\alpha_{i+\frac{1}{2}}+c) [U_i,U_{i+1}].
$$
Since $\alpha_{i+n+\frac{1}{2}}=-\alpha_{i+\frac{1}{2}}$, we obtain
$$
L_V(P(c))=c\sum_{i=1}^{2n}[U_i,U_{i+1}],
$$
which proves the proposition.
\end{proof}

If we admit signed lengths, equation \eqref{eq:Barbier} holds even for equidistants with cusps. In particular, for $c=0$ we obtain
\begin{equation}\label{eq:MlengthZero}
L_V(M)=0.
\end{equation}
For smooth closed curves this result was obtained in \cite{Tabach97} .

\subsection{Curvature and evolutes}

\paragraph{Minkowskian normals and evolutes} In the smooth case, the Minkowskian normal at a point $P$ is the line $P+sU$, where $P$ and $U$ have parallel tangents (see \cite{Tabach97}).
The evolute is the envelope of Minkowskian normals. For a polygon $P$, define the {\it Minkowskian normal} at a vertex $P_i$ as the line $P_i+sU_i$, $1\leq i\leq 2n$, and the {\it evolute} as the polygonal arc whose vertices
are the intersections of $P_i+sU_i$ and $P_{i+1}+sU_{i+1}$. These intersections are given by
\begin{equation}\label{eq:defineEvoluta}
E_{i+\frac{1}{2}}=P_i-\mu_{i+\frac{1}{2}}U_i=P_{i+1}-\mu_{i+\frac{1}{2}}U_{i+1},
\end{equation}
where $\mu_{i+\frac{1}{2}}$, $1\leq i\leq 2n$, is defined by
\begin{equation}\label{eq:defineCurvature}
P_{i+1}-P_i=\mu_{i+\frac{1}{2}}\left( U_{i+1}-U_i \right).
\end{equation}

\paragraph{Curvature center and radius} In \cite{Petty}, three different notions of Minkowskian curvature are defined, where the circular curvature is directly related to evolutes.
The circular center $E$ and the corresponding radius of curvature $\mu$ are defined by the condition that $E+\mu U$ has a $3$-order contact with the curve at a given point (see \cite{Tabach97}).

For polygons, we define the {\it center of curvature} $E_{i+\frac{1}{2}}$ and the {\it curvature radius} $\mu_{i+\frac{1}{2}}$ of the side $P_iP_{i+1}$ by the condition that
the $(i+\frac{1}{2})$-side of $E_{i+\frac{1}{2}}+\mu_{i+\frac{1}{2}}U$ matches exactly $P_iP_{i+1}$ (see Figure \ref{fig:Curvature}). Thus we get equations \eqref{eq:defineEvoluta} and \eqref{eq:defineCurvature}.
From equations \eqref{eq:defineVlength} and \eqref{eq:defineCurvature} we obtain that
the curvature radius of the side $P_iP_{i+1}$ is also given by
\begin{equation}\label{eq:Curvature2}
\mu_{i+\frac{1}{2}}=\frac{\lambda_{i+\frac{1}{2}}}{[U_i,U_{i+1}]}.
\end{equation}

\begin{figure}[htb]
 \centering
 \includegraphics[width=0.50\linewidth]{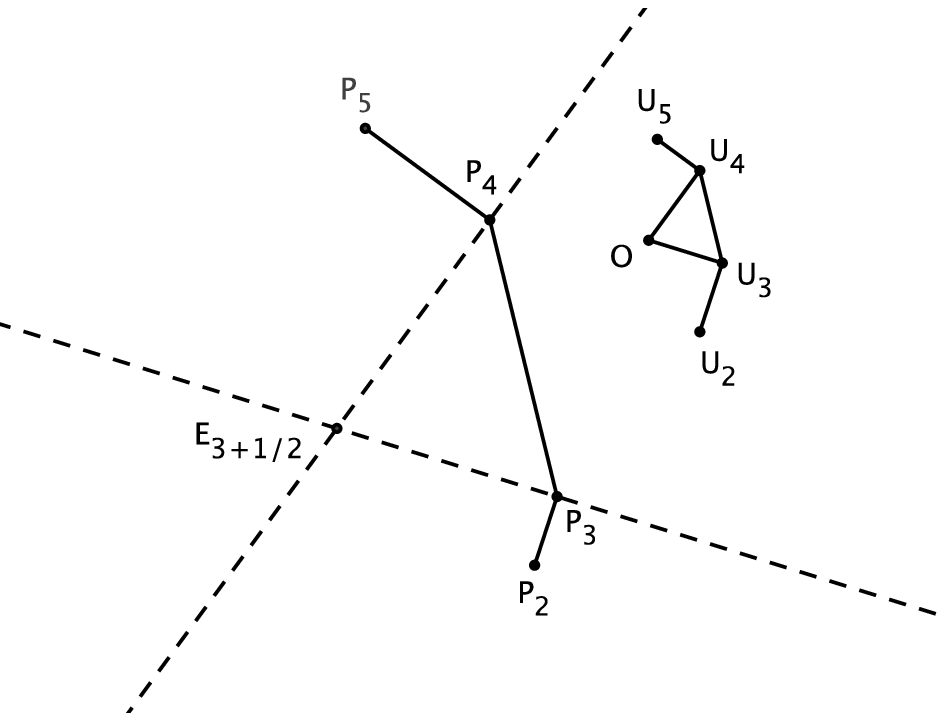}
 \caption{ The center of curvature of the side $P_3P_4$. }
\label{fig:Curvature}
\end{figure}

A vertex $E_{i+\frac{1}{2}}$ is a cusp of the evolute if the vertices $E_{i-\frac{1}{2}}$ and $E_{i+\frac{3}{2}}$ are in the same half-plane defined by the parallel to $P_iP_{i+1}$ through $E_{i+\frac{1}{2}}$.
The evolute of a CW polygon coincides with its \emph{center symmetry set} as defined in \cite{Craizer13}, where 
it is proved that it coincides with the union of cusps of all equidistants of $P$. It is also proved in \cite{Craizer13} that the number of cusps of the evolute is odd and at least the number of cusps of the central equidistant (see Figure \ref{fig:OctoEvolute}).

\begin{figure}[htb]
 \centering
 \includegraphics[width=0.80\linewidth]{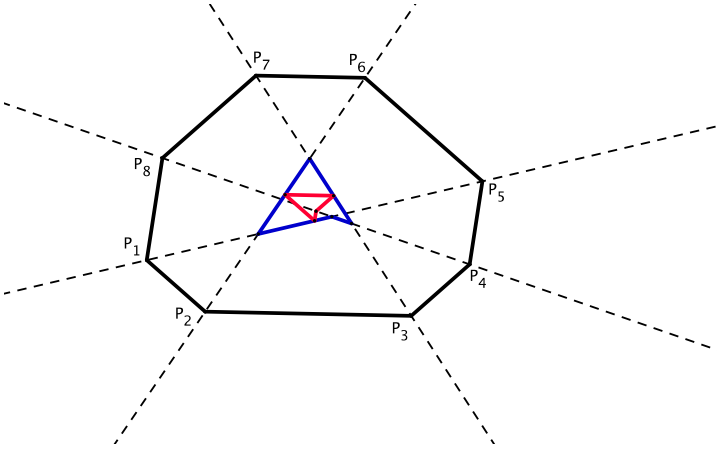}
 \caption{ The inner polygonal arc is the central equidistant $M$ of $P$, and the outer polygonal arc is its evolute $E$. }
\label{fig:OctoEvolute}
\end{figure}

\paragraph{Sum of curvature radii}

Consider equation \eqref{eq:defineCurvature} for two opposite sides, and sum up to obtain, for $1\leq i\leq n$,
\begin{equation*}
P_{i+1}-P_{i+n+1}+P_{i+n}-P_i=(\mu_{i+\frac{1}{2}}+\mu_{i+n+\frac{1}{2}}) (U_{i+1}-U_i).
\end{equation*}
Since $P$ has constant Minkowskian width,
\begin{equation*}
2c(U_{i+1}-U_i)=(\mu_{i+\frac{1}{2}}+\mu_{i+n+\frac{1}{2}}) (U_{i+1}-U_i).
\end{equation*}
We conclude that
\begin{equation}\label{eq:SumReciprocalCurvature}
\mu_{i+\frac{1}{2}}+\mu_{i+n+\frac{1}{2}}=2c.
\end{equation}
The corresponding result for smooth curves is given in \cite{Petty}, Th. 6.14.(c).

\paragraph{Involutes and equidistants }

Consider the one-parameter family of equidistants given by equation \eqref{eq:REescalaEqui}.
The radius of curvature of $P_i(c)P_{i+1}(c)$ is the radius of curvature of $M_iM_{i+1}$ plus $c$. Thus, for $1\leq i\leq 2n$,
\begin{equation}
E_{i+\frac{1}{2}}(c)=M_i+cU_i-\left(\mu_{i+\frac{1}{2}}+c\right)U_i=E_{i+\frac{1}{2}}.
\end{equation}
We conclude that the evolute of any equidistant of $P$ is equal to the evolute of $P$. Reciprocally, any polygonal arc
whose evolute is equal to $E(P)$ is an equidistant of $P$. We define an {\it involute} of $E$ as any polygonal arc whose evolute is $E$.
Thus the involutes of $E$ are the equidistants of $P$.

\subsection{The signed area of the central equidistant} \label{sec:SignedAreas}

Given two closed curves $P$ and $Q$, the mixed area of their convex hulls is defined by the equation
$$
A(P+tQ)=A(P)+2tA(P,Q)+t^2A(Q).
$$
The Minkowski inequality says that $A(P,Q)^2\geq A(P)A(Q)$. The next lemma is well-known, see \cite[\S\S~6.3]{Gruber}.

\begin{lemma}\label{lemma:MixedArea}
Take $P$ and $Q$ as convex polygons with $k$ parallel corresponding sides. The {\it mixed area} of $P$ and $Q$ is given by
\begin{equation*}
A(P,Q)=\frac{1}{2}\sum_{i=1}^{k}[Q_{i},P_{i+1}-P_i]=\frac{1}{2}\sum_{i=1}^{k}[P_{i+1}, Q_{i+1}-Q_i].
\end{equation*}
\end{lemma}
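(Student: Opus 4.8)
The plan is to derive the formula from the shoelace area formula together with the additive structure of Minkowski sums. Recall that a convex polygon $R=\{R_1,\dots,R_k\}$, listed counterclockwise with indices mod $k$, has area $A(R)=\frac12\sum_{i=1}^{k}[R_i,R_{i+1}]$, an expression that is translation invariant since $\sum_i(R_{i+1}-R_i)=0$. The hypothesis that $P$ and $Q$ have $k$ parallel corresponding sides means that they carry the same cyclic list of edge directions (some edges possibly degenerate); hence for $t\ge 0$ the Minkowski sum $P+tQ$ is the convex polygon whose $i$-th edge is $(P_{i+1}-P_i)+t(Q_{i+1}-Q_i)$, and therefore its vertices are $P_i+tQ_i$, $1\le i\le k$. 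First I would substitute this into the shoelace formula to get $A(P+tQ)=\frac12\sum_i\big[P_i+tQ_i,\,P_{i+1}+tQ_{i+1}\big]$.

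Next I would expand by bilinearity and antisymmetry of $[\cdot,\cdot]$ and compare with the defining identity $A(P+tQ)=A(P)+2tA(P,Q)+t^2A(Q)$, which is a polynomial identity in $t$; matching the coefficients of $t$ yields
\begin{equation*}
4A(P,Q)=\sum_{i=1}^{k}\big([P_i,Q_{i+1}]+[Q_i,P_{i+1}]\big).
\end{equation*}
Writing $S_1=\sum_i[P_i,Q_{i+1}]$, $S_2=\sum_i[Q_i,P_{i+1}]$ and $S_3=\sum_i[Q_i,P_i]$, this says $4A(P,Q)=S_1+S_2$, whereas both expressions in the statement unwind to $S_2-S_3$ (their mutual equality being a one-line reindexing $i\mapsto i+1$ together with antisymmetry and $\sum_i[P_i,Q_i]=\sum_i[P_{i+1},Q_{i+1}]$).

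So the remaining task is to check $S_1=S_2-2S_3$. Using antisymmetry and the shift $i\mapsto i-1$ one has $S_1=-\sum_i[Q_i,P_{i-1}]$, so $S_1-S_2+2S_3=\sum_i[Q_i,\,2P_i-P_{i-1}-P_{i+1}]$; since $2P_i-P_{i-1}-P_{i+1}=(P_i-P_{i-1})-(P_{i+1}-P_i)$, shifting $i\mapsto i+1$ in the contribution of $P_i-P_{i-1}$ turns the sum into $\sum_i[Q_{i+1}-Q_i,\,P_{i+1}-P_i]$, and this vanishes term by term because $P_iP_{i+1}$ is parallel to $Q_iQ_{i+1}$ (both edge vectors being $0$ when one of them degenerates). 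Hence $4A(P,Q)=2S_2-2S_3$, which gives the first formula, and the second follows as above. I do not expect a genuine obstacle here — the result is classical (cf.\ \cite[\S\S~6.3]{Gruber}) — the only points needing attention are the identification of the vertices of $P+tQ$ and the uniform treatment of degenerate edges, where ``parallel corresponding sides'' is read as ``each pair of corresponding edge vectors is linearly dependent''.
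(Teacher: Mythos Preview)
Your argument is correct: the identification of the vertices of $P+tQ$ as $P_i+tQ_i$ (thanks to the parallel-edges hypothesis), the expansion of the shoelace formula, and the cancellation $\sum_i[Q_{i+1}-Q_i,P_{i+1}-P_i]=0$ all go through as you wrote them, and together they yield both expressions in the statement.

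As for comparison with the paper: there is nothing to compare. The paper does not prove this lemma; it simply quotes it as well known and refers to \cite[\S\S~6.3]{Gruber}. Your write-up supplies a self-contained proof where the paper opted for a citation, so in that sense you have done more than what the paper does. The only remark worth adding is that your proof actually \emph{uses} the parallel-sides hypothesis (in the vanishing of $[Q_{i+1}-Q_i,P_{i+1}-P_i]$ and in identifying the vertices of $P+tQ$), which is reassuring since the two bilinear expressions in the statement differ in general by exactly $\tfrac12\sum_i[Q_{i+1}-Q_i,P_{i+1}-P_i]$.
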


Assume that $P$ is a closed convex polygon whose sides are parallel to the sides of the centered polygon $U$, and take $Q=U$ in Lemma \ref{lemma:MixedArea}. We obtain
$$
A(P,U)=\frac{1}{2}\sum_{i=1}^{2n} [U_{i},P_{i+1}-P_i]=\frac{1}{2}\sum_{i=1}^{2n}\lambda_{i+\frac{1}{2}}=\frac{1}{2}L_V(P),
$$
where we have used \eqref{eq:defineVlength} and \eqref{eq:defineVlength2}. Moreover, the Minkowski inequality becomes
\begin{equation}\label{eq:Isoperimetric}
L^2_V(P)\geq 4A(U)A(P).
\end{equation}

\begin{lemma}
Let $M$ be the central equidistant of a CW-polygon $P$. Then the mixed area $A(M,M)$ is non-positive.
\end{lemma}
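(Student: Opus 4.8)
The plan is to deduce the claim from the isoperimetric-type inequality \eqref{eq:Isoperimetric} by pushing $M$ outward to a large convex equidistant of $P$, so that the Minkowski inequality becomes available. First I would record that the bilinear form of Lemma~\ref{lemma:MixedArea}, together with the quadratic identity $A(P+Q)=A(P)+2A(P,Q)+A(Q)$, remains valid for arbitrary closed polygons with parallel corresponding sides as long as all areas are read as signed (shoelace) areas and $P+Q$ is formed vertex by vertex; in particular $A(M,M)$ is then just the signed area of $M$. Applying this to the rescaled family $P(c)=M+cU$ of \eqref{eq:REescalaEqui} gives
$$A(P(c))=A(M,M)+2c\,A(M,U)+c^{2}A(U),$$
and since $A(M,U)=\frac{1}{2}L_V(M)=0$ by \eqref{eq:MlengthZero}, this collapses to $A(P(c))=A(M,M)+c^{2}A(U)$ for every $c\in\R$.

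Next I would use \eqref{eq:defineAlpha} to write $P_{i+1}(c)-P_i(c)=(\alpha_{i+\frac12}+c)(U_{i+1}-U_i)$; hence, for $c>\max_i|\alpha_{i+\frac12}|$, the polygon $P(c)$ is a genuine convex polygon whose sides are parallel to the corresponding sides of $U$. For such $c$ its signed area is its ordinary area, so \eqref{eq:Isoperimetric} applies to $P(c)$, and Barbier's theorem \eqref{eq:Barbier} gives that its (now genuine, not merely signed) $V$-length equals $L_V(P(c))=2cA(U)$. Feeding these two facts into \eqref{eq:Isoperimetric} for $P(c)$ yields
$$4c^{2}A(U)^{2}=L_V^{2}(P(c))\ \ge\ 4A(U)\,A(P(c))=4A(U)\bigl(A(M,M)+c^{2}A(U)\bigr),$$
and after cancelling $4c^{2}A(U)^{2}$ and dividing by $4A(U)>0$ we obtain $A(M,M)\le 0$.

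The only genuine obstacle is that $M$ itself is typically non-convex — it has cusps — so neither the Minkowski inequality nor a naive area estimate applies to it directly; the way around this is to replace $M$ by a large convex equidistant, which is legitimate precisely because constant width forces $A(M,U)=0$, so that the term linear in $c$ drops out and the isoperimetric inequality for $P(c)$ degenerates to exactly the desired sign condition on $A(M,M)$. The remaining points — the validity of the mixed-area identities for signed areas of non-convex polygons, and the fact that $P(c)$ is convex and positively oriented for large $c$ — are routine and can be disposed of quickly.
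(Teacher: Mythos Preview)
Your argument is correct and follows essentially the same route as the paper's proof: expand $A(P(c))$ bilinearly, kill the linear term via $A(M,U)=\tfrac12 L_V(M)=0$, and then apply the isoperimetric inequality \eqref{eq:Isoperimetric} together with Barbier's theorem to a convex equidistant to conclude $A(M,M)\le 0$. The only difference is that you are more explicit than the paper about why one must pass to a sufficiently large $c$ (so that $P(c)$ is genuinely convex and \eqref{eq:Isoperimetric} applies), which is a welcome clarification rather than a departure.
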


\begin{proof}
Let $P(c)$ be defined by equation \eqref{eq:REescalaEqui}. Then
$$
A(P(c),P(c))=A(M,M)+2cA(M,U)+c^2A(U,U).
$$
Now equation \eqref{eq:MlengthZero} says that $A(M,U)=0$. Moreover, the isoperimetric inequality \eqref{eq:Isoperimetric} for curves of constant width says that
$$
A(P)\leq c^2A(U).
$$
We conclude that
$$
A(M,M)\leq 0.
$$
\end{proof}

Define the {\it signed area} of $M$ as $SA(M)=-A(M,M)$. In general, the signed area
is a sum of positive and negative areas, but when $M$ is a simple curve, it coincides with the area bounded by $M$.

\subsection{Relation between length and area of a half polygon}

Define $\beta_i$ by
\begin{equation}\label{eq:defineBeta}
\beta_i=\frac{1}{2}\sum_{j=i}^{n+i-1}\alpha_{j+\frac{1}{2}}[U_j,U_{j+1}].
\end{equation}
Observe that $\beta_{i+n}=-\beta_i$, $1\le i\leq n$, and
\begin{equation}\label{eq:deriveBeta}
 \beta_{i+1}-\beta_i =-\alpha_{i+\frac{1}{2}}[U_i,U_{i+1}].
\end{equation}

Denote by $A_{1}(i,c)$ and $A_2(i,c)$ the areas of the polygons with vertices \linebreak 
$\{P_i,P_{i+1},...,P_{i+n}\}$ and $\{P_{i+n},P_{i+n+1},...,P_{i}\}$. Observe that 
these polygons are bounded by $P$ and the diagonal $P_iP_{i+n}$.

\begin{Proposition}\label{prop:AreaProperty}
We have that
$$
A_1(i,c)-A_2(i,c)=4c\beta_i,
$$
for $1\leq i\leq 2n$.
\end{Proposition}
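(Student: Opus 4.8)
The plan is to prove the identity by induction on $i$, using the difference equation \eqref{eq:deriveBeta} for $\beta_i$ together with a corresponding difference equation for the quantity $A_1(i,c)-A_2(i,c)$. First I would fix notation: write $D_i := A_1(i,c) - A_2(i,c)$, so the claim is $D_i = 4c\beta_i$ for $1\le i\le 2n$. Since $\beta_{i+n} = -\beta_i$ and, geometrically, swapping the roles of the two half-polygons shows $A_1(i+n,c) = A_2(i,c)$ and $A_2(i+n,c)=A_1(i,c)$, we have $D_{i+n} = -D_i$; hence it suffices to establish the identity for $1\le i\le n$, and moreover the two sides transform compatibly under $i\mapsto i+n$. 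The base case $i=1$ can be checked directly, but it is cleaner to prove the telescoping step for all $i$ and then fix the additive constant using the symmetry $D_{i+n}=-D_i$ against $\beta_{i+n}=-\beta_i$, which forces the constant to vanish.

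Next I would compute $D_{i+1} - D_i$. Passing from the diagonal $P_iP_{i+n}$ to the diagonal $P_{i+1}P_{i+n+1}$ changes the polygon $\{P_i,\dots,P_{i+n}\}$ by deleting the triangle $P_iP_{i+1}P_{i+n+1}$ — wait, more carefully: the region bounded by $P$ and the chord $P_iP_{i+n}$ differs from the region bounded by $P$ and the chord $P_{i+1}P_{i+n+1}$ by the triangle with vertices $P_i, P_{i+n}, P_{i+n+1}$ (removed) and the triangle $P_i, P_{i+1}, P_{i+n+1}$ (added), or some such pair; I would pin down the exact two triangles by a careful picture. Each such triangle area is a determinant $\tfrac12[\,\cdot\,,\,\cdot\,]$ of edge vectors, and using \eqref{eq:defineCentral} $M_i = \tfrac12(P_i+P_{i+n})$ together with \eqref{eq:defineAlpha} $M_{i+1}-M_i = \alpha_{i+\frac12}[U_i,U_{i+1}]V_{i+\frac12}$ and the constant-width relation $P_i - P_{i+n} = 2cU_i$ (equivalently \eqref{eq:REescalaEqui} with the scaling $a=c$), these determinants collapse. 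The upshot I expect is
$$
D_{i+1} - D_i = -4c\,\alpha_{i+\frac12}[U_i,U_{i+1}],
$$
which is exactly $4c$ times \eqref{eq:deriveBeta}. Telescoping then gives $D_i = 4c\beta_i + \text{const}$, and the constant is killed by the antisymmetry noted above.

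The main obstacle is the bookkeeping in the geometric step: correctly identifying which two triangles account for the difference between the two half-polygons when the cutting diagonal rotates by one step, and getting all the signs right (the areas $A_1,A_2$ are unsigned, but the determinant computation is signed, and $P$ being convex with the stated vertex ordering is what reconciles them). Once the two triangles are correctly named, expressing their areas in terms of $P_{i+1}-P_i$, $P_{i+n+1}-P_{i+n}$ and $P_i - P_{i+n} = 2cU_i$ and then invoking $M_{i+1}-M_i = \tfrac12\big((P_{i+1}-P_i)+(P_{i+n+1}-P_{i+n})\big) = \alpha_{i+\frac12}[U_i,U_{i+1}]V_{i+\frac12}$ is a short determinant manipulation. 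I would also double-check the case where one of the opposite "sides" degenerates to a point (the broader sense of parallel sides from Lemma \ref{lemma:Reorder}), but there the corresponding $\alpha$-term simply vanishes and the argument goes through unchanged.
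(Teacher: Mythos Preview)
Your proposal is correct but takes a different route from the paper. The paper's proof is a one-line application of an external result: it invokes Lemma 4.1 of \cite{Craizer13}, which directly furnishes the closed formula
\[
A_1(i,c)-A_2(i,c)=-2\sum_{j=i}^{i+n-1}[M_{j+1}-M_j,\,cU_j],
\]
and then substitutes $M_{j+1}-M_j=\alpha_{j+\frac12}[U_j,U_{j+1}]V_{j+\frac12}$ together with $[V_{j+\frac12},U_j]=-1$ to read off $4c\beta_i$. Your approach is instead a discrete-derivative argument: compute the increment $D_{i+1}-D_i$, match it to $4c(\beta_{i+1}-\beta_i)$ via \eqref{eq:deriveBeta}, and kill the integration constant by the antisymmetry $D_{i+n}=-D_i$, $\beta_{i+n}=-\beta_i$. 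This is self-contained and in effect re-derives the cited lemma. One suggestion: rather than chasing the two triangles geometrically (where the sign bookkeeping you flag is genuinely finicky), use the shoelace formula for $A_1(i,c)$ and the relations $P_j=M_j+cU_j$, $P_{j+n}=M_j-cU_j$; the computation then collapses algebraically to $D_{i+1}-D_i=2c\,[M_{i+1}-M_i,\,U_i+U_{i+1}]=-4c\,\alpha_{i+\frac12}[U_i,U_{i+1}]$ without any convexity or degeneracy caveats.
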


\begin{proof}
Lemma 4.1. of \cite{Craizer13} says that
$$
A_1(i,c)-A_2(i,c)=-2\sum_{j=i}^{i+n-1} [M_{j+1}-M_j, cU_j]
$$
$$
=-2c\sum_{j=i}^{i+n-1}[\alpha_{j+\frac{1}{2}}[U_j,U_{j+1}]V_{j+\frac{1}{2}},U_j].
$$
Thus
$$
A_1(i,c)-A_2(i,c)=2c\sum_{j=i}^{i+n-1}\alpha_{j+\frac{1}{2}}[U_j,U_{j+1}]=4c\beta_i.
$$
\end{proof}

Denote by $L_V(i,c)$ the $V$-length of the polygonal arc whose vertices are $\{P_i(c),P_{i+1}(c),...,P_{i+n}(c)\}$. Then
\begin{equation}\label{eq:defineLi}
L_V(i,c)=\sum_{j=i}^{i+n-1}(\alpha_{i+\frac{1}{2}}+c)[U_j,U_{j+1}]=2cA(U)+2\beta_i.
\end{equation}

\begin{corollary} For $1\leq i\leq 2n$, the expression
$A_1(i,c)-cL_V(i,c)$ is independent of $i$.
\end{corollary}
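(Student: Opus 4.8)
The plan is to compute the difference $\bigl(A_1(i+1,c)-cL_V(i+1,c)\bigr)-\bigl(A_1(i,c)-cL_V(i,c)\bigr)$ and show it vanishes for each $i$; since the index is cyclic mod $2n$, constancy in $i$ follows. First I would use Proposition \ref{prop:AreaProperty} together with \eqref{eq:defineLi}. From \eqref{eq:defineLi} we have $L_V(i,c)=2cA(U)+2\beta_i$, so $cL_V(i,c)=2c^2A(U)+2c\beta_i$, and therefore $cL_V(i+1,c)-cL_V(i,c)=2c(\beta_{i+1}-\beta_i)$. By \eqref{eq:deriveBeta} this equals $-2c\,\alpha_{i+\frac12}[U_i,U_{i+1}]$.

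Next I would handle the $A_1$ term. The quantity $A_1(i,c)$ is the area of the polygon $\{P_i(c),\dots,P_{i+n}(c)\}$; passing from $i$ to $i+1$ replaces this by $\{P_{i+1}(c),\dots,P_{i+n+1}(c)\}$, i.e. the vertex $P_i(c)$ is dropped from one end and $P_{i+n+1}(c)$ is adjoined at the other. The change in area is thus the signed area swept out, which by the shoelace formula is expressible via the diagonals $P_i(c)P_{i+n}(c)$ and $P_{i+1}(c)P_{i+n+1}(c)$. A clean way to get it: by Proposition \ref{prop:AreaProperty}, $A_1(i,c)=A_2(i,c)+4c\beta_i$, and $A_1(i,c)+A_2(i,c)=A(P(c))$ is independent of $i$ (it is the full area of the equidistant $P(c)$, cut into two halves by the diagonal). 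Hence $A_1(i,c)=\tfrac12 A(P(c))+2c\beta_i$, so $A_1(i+1,c)-A_1(i,c)=2c(\beta_{i+1}-\beta_i)=-2c\,\alpha_{i+\frac12}[U_i,U_{i+1}]$, using \eqref{eq:deriveBeta} again.

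Combining the two computations, $\bigl(A_1(i+1,c)-A_1(i,c)\bigr)-\bigl(cL_V(i+1,c)-cL_V(i,c)\bigr) = -2c\,\alpha_{i+\frac12}[U_i,U_{i+1}] - \bigl(-2c\,\alpha_{i+\frac12}[U_i,U_{i+1}]\bigr)=0$, which is exactly what is needed. So in fact $A_1(i,c)-cL_V(i,c)=\tfrac12 A(P(c))-2c^2A(U)$, a constant depending only on $c$.

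The only delicate point is justifying $A_1(i,c)+A_2(i,c)=A(P(c))$ independently of $i$ — that is, that the two half-polygons cut off by the diagonal $P_i(c)P_{i+n}(c)$ together reconstitute the whole equidistant with its full (signed) area, regardless of which diagonal we pick. For a convex polygon this is immediate from additivity of area under subdivision along a chord; for equidistants with cusps one should read the identity with signed areas, and then it still holds because the shoelace sum over $\{P_i(c),\dots,P_{i+n}(c)\}$ plus the sum over $\{P_{i+n}(c),\dots,P_i(c)\}$ telescopes to the shoelace sum over the full cycle, the diagonal contributions cancelling. I would state this as the one line that makes the argument go through, and otherwise the proof is a short substitution. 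An alternative, should one prefer to avoid invoking the convex/simple case, is to derive $A_1(i+1,c)-A_1(i,c)$ directly from Lemma 4.1 of \cite{Craizer13} applied at indices $i$ and $i+1$ and subtracting, but the $\beta_i$ route above is cleaner.
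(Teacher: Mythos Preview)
Your argument is correct and, once you pivot to the closed form $A_1(i,c)=\tfrac12 A(P(c))+2c\beta_i$, it is essentially the paper's proof: the paper combines \eqref{eq:defineLi} with Proposition~\ref{prop:AreaProperty} and the identity $A_1+A_2=A(P)$ to obtain directly $2cL_V(i,c)-2A_1(i,c)=4c^2A(U)-A(P)$, without the preliminary consecutive-difference setup. Your explicit remark that $A_1(i,c)+A_2(i,c)$ equals the full (signed) area of $P(c)$, and that this needs a word of justification for non-convex equidistants, is something the paper uses silently.
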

\begin{proof}
By equation \eqref{eq:defineLi} and Proposition \ref{prop:AreaProperty}, we get
$$
2cL_V(i,c)-2A_1(i,c)=4c^2A(U)+4c\beta_i-2A_1(i,c)=4c^2A(U)-A(P),
$$
which proves the corollary.
\end{proof}

The above corollary presents the ``polygonal analogue'' of a known theorem holding for strictly convex curves (see \cite{Chakerian83}, eq. (2.1)).

\section{The involute of the central equidistant}

Recall that $P=\{P_1,...,P_{2n}\}$ is a convex polygon with parallel opposite sides and $U=\{U_1,...,U_{2n}\}$ is the Minkowski ball obtained
from $P$ by the construction of  Lemma \ref{lemma:SymmetricBall}. The polygon $V=\{V_1,...,V_{2n}\}$ represents the dual Minkowski ball (see Lemma \ref{lemma:DualBall})
and $M=\{M_1,...,M_n\}$ is the central equidistant of $P$ (see equation \eqref{eq:defineCentral}).

\subsection{Basic properties of the involute $N$ of $M$}

Define the polygon $N$ by
\begin{equation}\label{eq:defineInvoluta}
 N_{i+\frac{1}{2}}=M_i+\beta_i V_{i+\frac{1}{2}},
\end{equation}
$1\leq i\leq 2n$.
Observe that $N_{i+\frac{1}{2}}=N_{i+n+\frac{1}{2}}$. Due to equations \eqref{eq:defineAlpha} and \eqref{eq:deriveBeta}, we can also write
\begin{equation}\label{eq:defineInvoluta2}
N_{i+\frac{1}{2}}=M_{i+1}+\beta_{i+1} V_{i+\frac{1}{2}}.
\end{equation}

\begin{lemma}
The polygon $N$ has constant $V$-width, and the evolute of $N$ is $M$.
\end{lemma}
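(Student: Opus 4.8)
The plan is to verify the two claims directly from the definition \eqref{eq:defineInvoluta}, in the following order. First I would establish that $N$ is a closed polygon with the correct combinatorial structure: since $V_{i+n+\frac12}=-V_{i+\frac12}$ and $\beta_{i+n}=-\beta_i$ (recorded right after \eqref{eq:defineBeta}) and $M_{i+n}=M_i$, we get $N_{i+n+\frac12}=N_{i+\frac12}$, as already noted in the text, so $N$ is a closed polygon with $n$ distinct vertices whose sides are indexed consistently. I would also note that the side direction of $N$ at the vertex $N_{i+\frac12}$ is relevant: comparing \eqref{eq:defineInvoluta} and \eqref{eq:defineInvoluta2} gives $N_{i+\frac12}-N_{i+\frac12}=0$ trivially, but the real content is that the \emph{edge} $N_{i-\frac12}N_{i+\frac12}$ equals $(M_i+\beta_iV_{i-\frac12})-(M_i+\beta_iV_{i+\frac12})=\beta_i(V_{i-\frac12}-V_{i+\frac12})$, using \eqref{eq:defineInvoluta2} with index shifted by one to rewrite $N_{i-\frac12}=M_i+\beta_iV_{i-\frac12}$. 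By the relation $V_{i+\frac12}-V_{i-\frac12}=-a U_i$ derived in \S 2.2, this edge is parallel to $U_i$, i.e.\ $N_{i-\frac12}N_{i+\frac12}\parallel U_i$.

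Next I would prove that the evolute of $N$ is $M$. The evolute of a polygon with sides parallel to those of $U$ is defined via \eqref{eq:defineEvoluta}: the center of curvature associated with a side is the common point of the Minkowskian normals at its two endpoints, where the normal at a vertex is the line through that vertex in the direction of the $U$-vertex corresponding to the \emph{adjacent sides}. For $N$, the vertex $N_{i+\frac12}$ is flanked by the edges in directions $U_i$ (the edge $N_{i-\frac12}N_{i+\frac12}$) and $U_{i+1}$ (the edge $N_{i+\frac12}N_{i+\frac32}$); hence the relevant ``side'' of $N$ for the evolute construction is the one joining the two vertices $N_{i+\frac12}$ and $N_{i+\frac32}$, both adjacent to direction $U_{i+1}$, and its center of curvature should come out to be $M_{i+1}$. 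Concretely, I would check that $N_{i+\frac12}$ lies on the line $M_{i+1}+sU_{i+1}$ and $N_{i+\frac32}$ lies on the line $M_{i+1}+sU_{i+1}$ as well — wait, more precisely: using \eqref{eq:defineInvoluta2}, $N_{i+\frac12}=M_{i+1}+\beta_{i+1}V_{i+\frac12}$, so $N_{i+\frac12}-M_{i+1}=\beta_{i+1}V_{i+\frac12}$, which is parallel to the edge direction of $N$ at that vertex, \emph{not} to $U_{i+1}$. The correct statement is that $N_{i+\frac12}-M_{i+1}=\beta_{i+1}V_{i+\frac12}$ and $N_{i+\frac12}=M_i+\beta_iV_{i+\frac12}$ from \eqref{eq:defineInvoluta}, so $M_i$ and $M_{i+1}$ both lie on the line through $N_{i+\frac12}$ in direction $V_{i+\frac12}$; equivalently, the Minkowskian normal of $N$ at $N_{i+\frac12}$ — the line through $N_{i+\frac12}$ in direction $U$-perpendicular, i.e.\ the direction that is the ``normal'' to the two edges meeting there — passes through $M_i$. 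I would make this precise by showing that the line $N_{i+\frac12}+tU_{\star}$ (the normal line at the vertex $N_{i+\frac12}$, where $U_\star$ is determined by the side directions $U_i,U_{i+1}$ exactly as in \eqref{eq:defineEvoluta}) reduces, via \eqref{eq:defineInvoluta}–\eqref{eq:defineInvoluta2} and the duality identities between $U$ and $V$ from \S 2.2, to a line through $M_i$; then intersecting the normals at $N_{i-\frac12}$ and $N_{i+\frac12}$ yields $M_i$, so $M$ is the evolute of $N$.

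For the constant $V$-width claim, I would use Corollary \ref{cor:CWequivalence}: a polygon with sides parallel to the sides of the centered polygon $V$ has constant $V$-width if and only if its corresponding diagonals are parallel to those of $V$, equivalently $N_{j}-N_{j+n}=2b(V_j-V_{j+n})$ for some constant $b$. But here the side $N_{i-\frac12}N_{i+\frac12}$ is parallel to $U_i$, and $U_i=-\frac{V_{i+\frac12}-V_{i-\frac12}}{[V_{i-\frac12},V_{i+\frac12}]}$ from \S 2.2, so the sides of $N$ are indeed parallel to the corresponding sides of $V$; and $N_{i+\frac12}=N_{i+n+\frac12}$ together with $N_{i-\frac12}$ and $N_{i+\frac12}$ being symmetric under the half-turn (since $M_{i+n}=M_i$, $\beta_{i+n}=-\beta_i$, $V$ centered) gives exactly the constant-$V$-width condition; alternatively one directly checks that the evolute computation above plus the relation \eqref{eq:SumReciprocalCurvature} analogue forces the curvature radii of opposite sides of $N$ to sum to a constant, which by the argument in the ``Involutes and equidistants'' paragraph is equivalent to constant width. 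The main obstacle I anticipate is purely bookkeeping: getting the index conventions for the half-integer-labelled vertices of $N$ and $V$ to line up with the definition \eqref{eq:defineEvoluta} of the evolute, so that ``the normal at $N_{i+\frac12}$'' is expressed in the correct direction $U_{i}$ versus $U_{i+1}$; once the two representations \eqref{eq:defineInvoluta} and \eqref{eq:defineInvoluta2} of the same vertex are used on the two sides adjacent to it, the identification with $M_i$ should be immediate from the $U$–$V$ duality relations, with no genuine computation beyond what is already in \S 2.2.
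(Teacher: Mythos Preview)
Your approach is essentially the paper's: compute the edge $N_{i+\frac12}-N_{i-\frac12}=\beta_i(V_{i+\frac12}-V_{i-\frac12})$ to see that the sides of $N$ are parallel to those of $V$, observe that the diagonals of $N$ vanish (since $N_{i+n+\frac12}=N_{i+\frac12}$) and hence are trivially multiples of the diagonals of $V$, and invoke Corollary~\ref{cor:CWequivalence}; then read off the evolute from \eqref{eq:defineInvoluta}.

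One correction worth making: your discussion of the evolute wanders before landing on the right line. The Minkowskian normal to $N$ at the vertex $N_{i+\frac12}$ is the line $N_{i+\frac12}+sV_{i+\frac12}$, \emph{not} a line in any $U$-direction; for a polygon with sides parallel to $V$ the roles of $U$ and $V$ in the definition \eqref{eq:defineEvoluta}--\eqref{eq:defineCurvature} are swapped. With that in hand the argument is one line: \eqref{eq:defineInvoluta} and \eqref{eq:defineInvoluta2} (shifted) give
\[
M_i \;=\; N_{i+\frac12}-\beta_i V_{i+\frac12} \;=\; N_{i-\frac12}-\beta_i V_{i-\frac12},
\]
which is exactly relation \eqref{eq:defineEvoluta} with $(P,U,\mu)\mapsto(N,V,\beta)$, so $M_i$ is the curvature center of the side $N_{i-\frac12}N_{i+\frac12}$ and $M$ is the evolute of $N$. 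The detour through ``the line $N_{i+\frac12}+tU_\star$'' and the paragraph on curvature-radius sums is unnecessary.
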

\begin{proof}
Since
\begin{equation}
N_{i+\frac{1}{2}}-N_{i-\frac{1}{2}}=\beta_i \left( V_{i+\frac{1}{2}}-  V_{i-\frac{1}{2}} \right),
\end{equation}
$1\leq i\leq n$, the sides of $N$ are parallel to the sides of $V$. Moreover, the diagonals of $N$ are zero, so they are multiples of the diagonals of $V$. We conclude from
Corollary \ref{cor:CWequivalence} that $N$ has constant $V$-width. Finally, from equation \eqref{eq:defineInvoluta} we conclude that the evolute of $N$ is $M$.
\end{proof}

The equidistants of $N$, which are the involutes of $M$, are curves of constant $V$-width
(see Figure \ref{fig:OctoInvolute}). In \cite{Craizer13}, these polygons were called the Parallel Diagonal Transforms of $P$.

\begin{figure}[htb]
 \centering
 \includegraphics[width=0.80\linewidth]{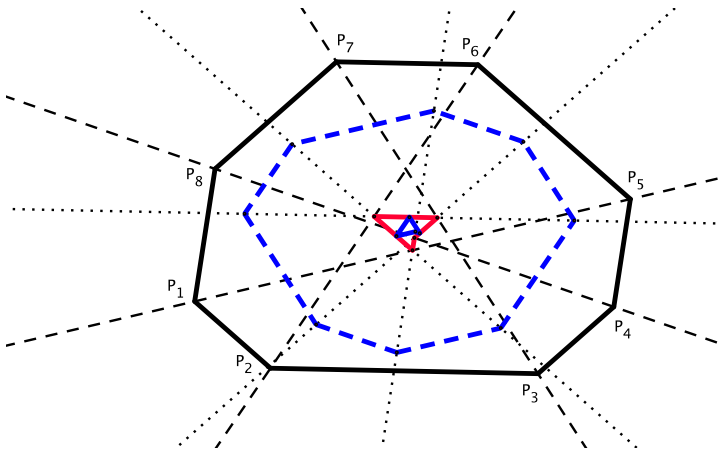}
 \caption{  The central equidistant $M$ together with two involutes of $M$: The inner curve is the central equidistant $N$, and the traced curve is an ordinary involute. }
\label{fig:OctoInvolute}
\end{figure}

\subsection{ The signed area of the involute of the central equidistant}

For smooth convex curves of constant Minkowskian width, the signed area of $N$ is not larger than the signed area of $M$ (see \cite{Craizer14}). We prove here the corresponding result for polygons.

\begin{Proposition}\label{prop:signedMN}
Denoting by $SA(M)$ and $SA(N)$ the signed areas of $M$ and $N$, we have
$$
SA(M)-SA(N)=\sum_{i=1}^{n}\beta_i^2 \left[ V_{i-\frac{1}{2}},V_{i+\frac{1}{2}} \right].
$$
\end{Proposition}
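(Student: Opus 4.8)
The plan is to express everything through the bilinear area form and then reduce to a summation‑by‑parts identity. Since $SA(M)=-A(M,M)$ and $SA(N)=-A(N,N)$, the asserted formula is equivalent to
$$A(N,N)-A(M,M)=\sum_{i=1}^{n}\beta_i^2\,\bigl[V_{i-\frac12},V_{i+\frac12}\bigr].$$
Both $M$ and $N$ are closed polygons (with $2n$‑periodic vertex sequences) whose sides are parallel to the corresponding sides of $V$, and for such a polygon $W$ we use $A(W,W)=\frac12\sum_{j=1}^{2n}[W_j,W_{j+1}-W_j]$, the formula of Lemma \ref{lemma:MixedArea} extended by bilinearity exactly as already done for $M$ in \S\S~\ref{sec:SignedAreas}. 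The two inputs I will exploit are the descriptions $N_{j-\frac12}=M_j+\beta_j V_{j-\frac12}$ and $N_{j+\frac12}=M_j+\beta_j V_{j+\frac12}$ coming from \eqref{eq:defineInvoluta}--\eqref{eq:defineInvoluta2}, and the two identities from \S\S~2.2: $V_{j+\frac12}-V_{j-\frac12}=-d_j\,U_j$ with $d_j:=[V_{j-\frac12},V_{j+\frac12}]$, and $[U_j,V_{j\pm\frac12}]=1$, so that $[V_{j\pm\frac12},U_j]=-1$.

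\emph{Step 1 (computing $A(N,N)$).} Write the $j$‑th side of $N$ as $N_{j+\frac12}-N_{j-\frac12}=\beta_j(V_{j+\frac12}-V_{j-\frac12})=-\beta_j d_j U_j$. Using $N_{j-\frac12}=M_j+\beta_j V_{j-\frac12}$, bilinearity, and $[V_{j-\frac12},U_j]=-1$, one gets
$$\bigl[N_{j-\frac12},\,N_{j+\frac12}-N_{j-\frac12}\bigr]=-\beta_j d_j[M_j,U_j]+\beta_j^2 d_j.$$
Summing over $j=1,\dots,2n$, dividing by $2$, and using the periodicity relations $\beta_{j+n}=-\beta_j$ and $d_{j+n}=d_j$ (so that $\frac12\sum_{j=1}^{2n}\beta_j^2 d_j=\sum_{i=1}^{n}\beta_i^2 d_i$) gives
$$A(N,N)=-\frac12\sum_{j=1}^{2n}\beta_j d_j[M_j,U_j]+\sum_{i=1}^{n}\beta_i^2\bigl[V_{i-\frac12},V_{i+\frac12}\bigr].$$

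\emph{Step 2 (identifying the first sum with $A(M,M)$).} This is the crux. Since $d_j U_j=V_{j-\frac12}-V_{j+\frac12}$, the first sum above equals $\frac12\sum_{j=1}^{2n}\beta_j[M_j,V_{j+\frac12}-V_{j-\frac12}]$. Reindexing $j\mapsto j+1$ in the part containing $V_{j-\frac12}$ rewrites this as $\frac12\sum_{j=1}^{2n}[\beta_j M_j-\beta_{j+1}M_{j+1},\,V_{j+\frac12}]$. Now decompose $\beta_{j+1}M_{j+1}-\beta_j M_j=\beta_j(M_{j+1}-M_j)+(\beta_{j+1}-\beta_j)M_{j+1}$: by \eqref{eq:defineAlpha} the vector $M_{j+1}-M_j$ is a multiple of $V_{j+\frac12}$, so its contribution to the bracket vanishes, while \eqref{eq:deriveBeta} gives $-(\beta_{j+1}-\beta_j)=\alpha_{j+\frac12}[U_j,U_{j+1}]$. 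Hence the sum becomes $\frac12\sum_{j=1}^{2n}\alpha_{j+\frac12}[U_j,U_{j+1}]\,[M_{j+1},V_{j+\frac12}]$, and using once more that $M_{j+1}-M_j\parallel V_{j+\frac12}$ to swap $M_{j+1}$ for $M_j$, it equals $\frac12\sum_{j=1}^{2n}[M_j,\alpha_{j+\frac12}[U_j,U_{j+1}]V_{j+\frac12}]=\frac12\sum_{j=1}^{2n}[M_j,M_{j+1}-M_j]=A(M,M)$.

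Combining the two steps yields $A(N,N)-A(M,M)=\sum_{i=1}^{n}\beta_i^2[V_{i-\frac12},V_{i+\frac12}]$, which is the proposition. I expect the only real difficulty to be bookkeeping: getting the index shift in the summation by parts right, invoking the two parallelism cancellations at exactly the right spots, and keeping track of the periodicities $\beta_{j+n}=-\beta_j$, $d_{j+n}=d_j$ that convert sums over $2n$ indices into sums over $n$ indices; no analytic or inequality input is needed. As an immediate byproduct, since $V$ is a positively oriented centered polygon we have $[V_{i-\frac12},V_{i+\frac12}]>0$, so the identity gives $SA(N)\le SA(M)$, recovering the comparison mentioned before the statement.
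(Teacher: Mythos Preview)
Your proof is correct and follows essentially the same route as the paper's: both arguments compute the two signed areas via the shoelace/mixed-area formula, exploit the parallelisms $M_{j+1}-M_j\parallel V_{j+\frac12}$ and $N_{j+\frac12}-N_{j-\frac12}\parallel U_j$ together with $\beta_{j+1}-\beta_j=-\alpha_{j+\frac12}[U_j,U_{j+1}]$, and reduce the difference to $\sum\beta_i^2[V_{i-\frac12},V_{i+\frac12}]$ by a discrete integration by parts. The only cosmetic difference is organization: the paper combines $-[M_i,M_{i+1}]+[N_{i-\frac12},N_{i+\frac12}]$ term by term (expressing both through $N_{i+\frac12}$), whereas you first expand $A(N,N)$ in terms of $M_j,U_j$ to isolate the $\beta^2$ contribution and then identify the remaining cross sum with $A(M,M)$.
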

\begin{proof}
Observe that
$$
\left[ M_i,M_{i+1}  \right]=  \left[ N_{i+\frac{1}{2}}-\beta_i V_{i+\frac{1}{2}}, \alpha_{i+\frac{1}{2}} (U_{i+1}-U_i )\right]= \alpha_{i+\frac{1}{2}}[N_{i+\frac{1}{2}},U_{i+1}-U_i]=
$$
$$
-(\beta_{i+1}-\beta_i)[N_{i+\frac{1}{2}},V_{i+\frac{1}{2}} ],\ \ \left[  N_{i-\frac{1}{2}},N_{i+\frac{1}{2}}\right] =\beta_i \left[ N_{i+\frac{1}{2}}, V_{i+\frac{1}{2}}-V_{i-\frac{1}{2}} \right],
$$
and so
$$
-\left[ M_i,M_{i+1} \right]+\left[  N_{i-\frac{1}{2}},N_{i+\frac{1}{2}} \right] =[N_{i+\frac{1}{2}}, \beta_{i+1}V_{i+\frac{1}{2}}-   \beta_{i}V_{i-\frac{1}{2}}  ] .
$$
Thus
$$
SA(M)-SA(N)= \sum_{i=1}^{n} - \left[ M_i,M_{i+1} \right]+\left[  N_{i-\frac{1}{2}} ,N_{i+\frac{1}{2}} \right] =
$$
$$
=-\sum_{i=1}^{n} \left[ N_{i+\frac{1}{2}}-N_{i-\frac{1}{2}}, \beta_iV_{i-\frac{1}{2}} \right]=\sum_{i=1}^{n}\beta_i^2 \left[ V_{i-\frac{1}{2}},V_{i+\frac{1}{2}} \right],
$$
where we have used that the difference
$$
[N_{i+\frac{1}{2}},\beta_{i+1}V_{i+\frac{1}{2}}]-[N_{i-\frac{1}{2}},\beta_{i}V_{i-\frac{1}{2}}]
$$
is equal to
$$
[N_{i+\frac{1}{2}}-N_{i-\frac{1}{2}},\beta_iV_{i-\frac{1}{2}}]+[N_{i+\frac{1}{2}},\beta_{i+1}V_{i+\frac{1}{2}}-\beta_{i}V_{i-\frac{1}{2}}],
$$
the discrete version of "integration by parts".
\end{proof}

\subsection{The involute is contained in the interior of the central equidistant}

We prove now that the region bounded by the central equidistant $M$ contains its involute $N$.  For smooth convex curves, this result was proved in \cite{Craizer14}.

The exterior of the curve $M$ is defined as the set of points of the plane that can be reached from a point of $P$ by a path that does not cross $M$.
The region $\overline{M}$ bounded by $M$ is the complement of its exterior. It is well known that a point in the exterior of $M$ is the center of exactly one chord of $P$ (see \cite{Craizer13}).

\begin{Proposition}\label{prop:NsubsetM}
The involute $N$ is contained in the region $\overline{M}$ bounded by $M$.
\end{Proposition}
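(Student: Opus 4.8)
The goal is to show that every vertex $N_{i+\frac12}$ of the involute lies in $\overline{M}$, equivalently that no $N_{i+\frac12}$ lies in the exterior of $M$. The natural tool is the characterization recalled just before the statement: a point lies in the exterior of $M$ if and only if it is the center of \emph{exactly one} chord of $P$, whereas points of $\overline M$ are centers of no chord or of more than one (or lie on $M$ itself). So the plan is to exhibit, for each $i$, at least one chord of $P$ whose midpoint is $N_{i+\frac12}$, and then argue that in fact $N_{i+\frac12}$ is never the center of exactly one chord. A cleaner route, which I would try first, is to show directly that $N_{i+\frac12}$ can be connected to $M$ by a path meeting $M$, or to use the ``winding number''/degree description of $\overline M$: since $M$ is a closed (possibly self-intersecting) polygon with $L_V(M)=0$ and an odd number $\geq 3$ of cusps, $\overline M$ is exactly the set of points with nonzero winding number with respect to $M$, and one shows each $N_{i+\frac12}$ has nonzero winding number.

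The concrete computation I would carry out: fix $i$ and consider the chord of $P$ in the direction $U_i$, i.e. the segment of $P$ cut by the line through $N_{i+\frac12}$ parallel to $U_i$. From \eqref{eq:defineInvoluta} and \eqref{eq:defineInvoluta2} we have $N_{i+\frac12}=M_i+\beta_iV_{i+\frac12}=M_{i+1}+\beta_{i+1}V_{i+\frac12}$, and since $M_i=\frac12(P_i+P_{i+n})$ while $V_{i+\frac12}\parallel P_{i+1}-P_i$, the point $N_{i+\frac12}$ is the midpoint of the chord joining the point $P_i+\beta_i(P_{i+1}-P_i)/\lambda_{i+\frac12}$ on the side $P_iP_{i+1}$ (assuming $0\le \beta_i\le \lambda_{i+\frac12}$, or the appropriate adjacent side otherwise) to the corresponding point on the opposite side $P_{i+n}P_{i+n+1}$; here one uses $\beta_{i+n}=-\beta_i$ together with the constant-width relations to check the second endpoint lands on $P$. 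Thus $N_{i+\frac12}$ is genuinely a midpoint of a chord of $P$. I would then show it is not the \emph{only} one: because $M$ has a cusp structure with at least three cusps, the map ``chord direction $\mapsto$ chord midpoint'' is not globally injective near the relevant directions, so $N_{i+\frac12}$ is either on $M$ or interior to it.

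I expect the main obstacle to be the bookkeeping around signs and index ranges: $\beta_i$ need not lie in $[0,\lambda_{i+\frac12}]$, so the chord through $N_{i+\frac12}$ in direction $U_i$ may meet $P$ on a side other than $P_iP_{i+1}$, and one must track which side, using convexity of $P$ and monotonicity of the partial sums defining $\beta_i$ in \eqref{eq:defineBeta}. A second delicate point is making the ``exterior = unique chord center'' dichotomy do real work: one needs that $N_{i+\frac12}$, being simultaneously expressible via $M_i$ and via $M_{i+1}$ in \eqref{eq:defineInvoluta}--\eqref{eq:defineInvoluta2}, lies on the segment $M_iM_{i+1}$ extended, hence ``between'' two consecutive vertices of $M$ in a way that forces it into $\overline M$; quantifying this will likely require the explicit description of $\overline M$ from \cite{Craizer13} rather than a soft argument. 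If the degree/winding-number approach proves cleaner, the obstacle shifts to computing the winding number of $M$ about $N_{i+\frac12}$ directly, which amounts to the same combinatorial analysis of the cusps of $M$.
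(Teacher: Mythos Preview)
Your proposal identifies the right criterion---points in the exterior of $M$ are exactly the centers of a unique chord of $P$---but it has two genuine gaps.

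First, showing only that each \emph{vertex} $N_{i+\frac12}$ lies in $\overline{M}$ is insufficient: $\overline{M}$ need not be convex ($M$ typically has cusps and may self-intersect), so a side $N_{i-\frac12}N_{i+\frac12}$ could leave $\overline{M}$ even if both endpoints lie inside. The paper accordingly proves the claim for every point $C$ on each side of $N$, and only then concludes $\overline{N}\subset\overline{M}$.

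Second, and more seriously, you have no mechanism for showing that your point is the midpoint of \emph{more than one} chord. Exhibiting one chord---the one parallel to the diagonal direction $U_i$---is easy, as you note; but every interior point of $P$ is the midpoint of some chord, so this by itself is vacuous. Your appeal to ``$M$ has at least three cusps'' is too soft: that says the midpoint map is globally non-injective, not that it fails to be injective at the specific point $N_{i+\frac12}$. The ingredient you are missing is an \emph{area inequality}: the line through $N_{i-\frac12}$ and $N_{i+\frac12}$, which is parallel to $P_iP_{i+n}$, divides $P$ into two pieces of areas $B_1\geq B_2$, with the smaller piece on the side of $P_i$ and $P_{i+n}$. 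This follows from Proposition~\ref{prop:AreaProperty} (the identity $A_1-A_2=4c\beta_i$) together with the definition \eqref{eq:defineInvoluta} of $N$. With this inequality in hand, a polar-coordinate comparison centered at $C$ forces the function $\phi\mapsto r(\phi+\pi)-r(\phi)$ to vanish at least twice on the arc away from $P_i$ (otherwise one would get $B_1<B_2$) and once more on the arc near the diagonal direction, giving at least three chords with midpoint $C$ and hence $C\in\overline{M}$. Without this quantitative input there is no leverage, and the winding-number alternative you mention would require an equivalent estimate.
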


The proof is based on two lemmas. For a fixed index $i$, denote by $l(i)$ the line parallel to $P_{i+n}-P_i$ through $N_{i-\frac{1}{2}}$ and $N_{i+\frac{1}{2}}$. Then $l(i)$ divides the interior of $P$ into two regions of areas $B_1=B_1(i)$ and $B_2=B_2(i)$, where the second one contains $P_i$ and $P_{i+n}$.

\begin{lemma}
We have that $B_1(i)\geq B_2(i)$, $1\leq i\leq n$.
\end{lemma}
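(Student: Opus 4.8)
The plan is to reduce the statement to a one-dimensional estimate: show that $l(i)$ is parallel to the diagonal $P_iP_{i+n}$ and lies on the $B_1$-side of it, and that the slab of $P$ caught between the diagonal and $l(i)$ is too thin to contain half of $A(P)$. Write the original polygon as $P=P(c)$ in the family \eqref{eq:REescalaEqui}, so that by \eqref{eq:defineCentral} one has $P_i-M_i=\frac{1}{2}(P_i-P_{i+n})=cU_i$ and $P_i-P_{i+n}=2cU_i$, and set $\phi(q)=[U_i,q-M_i]$. Since $P_i-M_i$ and $P_{i+n}-M_i$ are multiples of $U_i$, the diagonal $P_iP_{i+n}$ lies on the line $\{\phi=0\}$; by \eqref{eq:defineInvoluta}, \eqref{eq:defineInvoluta2} and the identities $[U_i,V_{i\pm\frac{1}{2}}]=1$ from the proof of Lemma~\ref{lemma:DualBall}, the points $N_{i\pm\frac{1}{2}}=M_i+\beta_iV_{i\pm\frac{1}{2}}$ satisfy $\phi(N_{i\pm\frac{1}{2}})=\beta_i$, so $l(i)=\{\phi=\beta_i\}$, a line parallel to $P_{i+n}-P_i$.

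Next I would record two identities. The line $\{\phi=0\}$ carries the diagonal and splits $P$ into $P\cap\{\phi\ge0\}$ and $P\cap\{\phi\le0\}$; since $\phi(P_{i+1})=[U_i,P_{i+1}-P_i]=\lambda_{i+\frac{1}{2}}\ge0$ by \eqref{eq:defineVlength}, the first of these is the sub-polygon $\{P_i,\dots,P_{i+n}\}$ and so has area $A_1(i,c)$, and the second has area $A_2(i,c)$. Together with $A_1(i,c)+A_2(i,c)=A(P)$, Proposition~\ref{prop:AreaProperty} gives $A_1(i,c)=\frac{1}{2}A(P)+2c\beta_i$ and $A_2(i,c)=\frac{1}{2}A(P)-2c\beta_i$. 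The second identity I need is that every chord of $P$ parallel to $U_i$ is short: if $p,q\in P$ with $p-q=\lambda U_i$, then $\lambda=[p-q,V_{i+\frac{1}{2}}]=[p,V_{i+\frac{1}{2}}]-[q,V_{i+\frac{1}{2}}]\le[P_i-P_{i+n},V_{i+\frac{1}{2}}]=2c$, and likewise $-\lambda\le2c$, so $|\lambda|\le2c$ (equivalently, the diagonal $P_iP_{i+n}$ is a longest chord of $P$ among those parallel to $U_i$). In the affine coordinates $(\xi,\eta)\mapsto M_i+\xi U_i+\eta V_{i+\frac{1}{2}}$, whose Jacobian is $[U_i,V_{i+\frac{1}{2}}]=1$ and in which $\phi=\eta$, each slice $P\cap\{\phi=\eta\}$ is a segment of length $\ell(\eta)\in[0,2c]$; hence $A_1(i,c)=\int_0^{\eta^+}\ell$, $A_2(i,c)=\int_{\eta^-}^0\ell$, and $\mathrm{area}\big(P\cap\{\min(0,\beta_i)\le\phi\le\max(0,\beta_i)\}\big)\le 2c|\beta_i|$.

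Finally I would combine the pieces. Suppose $\beta_i\ge0$ (if $\beta_i\le0$ the argument is the same, with $\{\phi\ge0\}$ and $\{\phi\le0\}$, hence $A_1$ and $A_2$, interchanged). Because $P_i,P_{i+n}\in\{\phi=0\}\subseteq\{\phi\le\beta_i\}$, the region $B_2(i)$ equals $P\cap\{\phi\le\beta_i\}$, whose area is $A_2(i,c)+\mathrm{area}\big(P\cap\{0\le\phi\le\beta_i\}\big)\le A_2(i,c)+2c\beta_i=\frac{1}{2}A(P)$. Therefore $B_1(i)=A(P)-B_2(i)\ge\frac{1}{2}A(P)\ge B_2(i)$, which is the assertion; moreover $A_1(i,c)=\int_0^{\eta^+}\ell\le 2c\eta^+$ forces $\beta_i<\eta^+$, so $l(i)$ really does meet the interior of $P$, as the statement presupposes. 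I expect the only delicate point to be the bookkeeping in the second paragraph: correctly identifying $A_1(i,c),A_2(i,c)$ with the two sides of $\{\phi=0\}$ and $B_2(i)$ with the appropriate sub-level set of $\phi$ according to the sign of $\beta_i$. Once that is done, only the one-sided bound $\ell\le 2c$ is used — no concavity of $\ell$ is needed.
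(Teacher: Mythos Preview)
Your argument is correct and takes a genuinely different route from the paper's. The paper computes the transfer of area between the diagonal $P_iP_{i+n}$ and the line $l(i)$ \emph{exactly}, writing $B_1(i)=A_1(i)-(2c\beta_i-\delta_i-\eta_i)$ and $B_2(i)=A_2(i)+(2c\beta_i-\delta_i-\eta_i)$, where $\delta_i\geq 0$ is the area caught between $l(i)$, the diagonal, and the two supporting lines of $P$ at $P_iP_{i+1}$ and $P_{i+n-1}P_{i+n}$ that lies outside $P$, and $\eta_i\geq 0$ is the area of the triangle $M_iN_{i-\frac{1}{2}}N_{i+\frac{1}{2}}$; combining with $A_1-A_2=4c\beta_i$ yields $B_1(i)=\tfrac{1}{2}A(P)+\delta_i+\eta_i\geq \tfrac{1}{2}A(P)-\delta_i-\eta_i=B_2(i)$. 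You instead bound the slab area from above by the crude estimate $\ell(\eta)\leq 2c$ coming directly from the constant-width property, which already suffices to get $B_2(i)\leq\tfrac{1}{2}A(P)$. Your approach is more analytic and self-contained (no dependence on a figure to identify $\delta_i,\eta_i$), handles the sign of $\beta_i$ cleanly, and as a bonus verifies that $l(i)$ actually crosses $P$; the paper's approach, on the other hand, yields the sharper exact formula $B_1(i)-B_2(i)=2(\delta_i+\eta_i)$.
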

\begin{proof}
We have that
$$
B_1(i)=A_1(i)-(2c\beta_i-\delta_i-\eta_i),\ \ B_2(i)=A_2(i)+(2c\beta_i-\delta_i-\eta_i),
$$
where $\delta_i$ is the area of the regions outside $P$ and between $l(i)$, $P_iP_{i+n}$ and the support lines of $P_iP_{i+1}$ and $P_{i+n-1}P_{i+n}$, and $\eta_i$ is the area of the triangle
$M_iN_{i+\frac{1}{2}}N_{i-\frac{1}{2}}$ (see Figure \ref{fig:NAreas1}). Since,
by Proposition \ref{prop:AreaProperty}, $4c\beta_i=A_1-A_2$, we conclude that
$$
B_1(i)=\frac{A(P)}{2}+\delta_i+\eta_i,\ \ B_2(i)=\frac{A(P)}{2}-\delta_i-\eta_i,
$$
which proves the lemma.
\end{proof}

\begin{figure}[htb]
 \centering
 \includegraphics[width=0.70\linewidth]{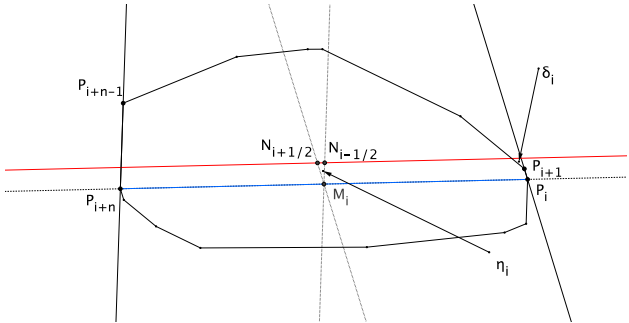}
 \caption{ The line through $N_{i+\frac{1}{2}}$ and $N_{i-\frac{1}{2}}$ divides the polygon into two regions of areas $B_1$ and $B_2$. }
\label{fig:NAreas1}
\end{figure}

\begin{lemma}\label{lemma:interiorM}
Choose $C$ in the segment $N_{i-\frac{1}{2}}N_{i+\frac{1}{2}}$. Then $C$ is in the region bounded by $M$.
\end{lemma}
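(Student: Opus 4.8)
The plan is to deduce the lemma from the description of $\overline M$ recalled just before the statement: a point in the exterior of $M$ is the center of exactly one chord of $P$ (and, recall, a point outside $P$ is the center of no chord of $P$). Hence it suffices to show that $C$ is the center of \emph{at least two} distinct chords of $P$. Indeed, any point that is the center of a chord of $P$ lies in $P$; so such a $C$ lies in $P$, and by the quoted fact it cannot lie in the exterior of $M$; therefore $C\in\overline M$. In particular this covers the endpoints $C=N_{i\pm\frac12}$, so Proposition \ref{prop:NsubsetM} follows by letting $i$ and $C$ vary.

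First I would locate $C$ on the line $l(i)$ of the preceding lemma. By \eqref{eq:defineInvoluta} and \eqref{eq:defineInvoluta2}, $N_{i-\frac12}=M_i+\beta_iV_{i-\frac12}$ and $N_{i+\frac12}=M_i+\beta_iV_{i+\frac12}$, so every point of the segment has the form $C=M_i+\beta_i v$ with $v$ on the side $[V_{i-\frac12},V_{i+\frac12}]$ of the dual ball; since $V_{i+\frac12}-V_{i-\frac12}=-aU_i$ is parallel to $U_i$, the segment lies on the line through $N_{i\pm\frac12}$ parallel to the diagonal $P_iP_{i+n}$, i.e.\ on $l(i)$. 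By the preceding lemma, $l(i)$ divides $P$ into two pieces of areas $B_1(i)\ge B_2(i)$, the smaller piece $B_2(i)$ being the one that contains $P_i$ and $P_{i+n}$.

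To produce the chords I would pass to the point reflection $P'=2C-P$, a convex polygon. Chords of $P$ with center $C$ correspond bijectively to unordered pairs $\{X,2C-X\}$ with $X\in\partial P\cap\partial P'$, so the goal becomes: $\partial P\cap\partial P'$ has at least four points. Since $C\in l(i)$, the reflection in $C$ maps the line $l(i)$ onto itself and swaps its two half-planes; it therefore carries the smaller piece $B_2(i)$ of $P$ onto a region of equal area lying on the $B_1(i)$-side of $l(i)$, and the larger piece $B_1(i)$ onto a region on the $B_2(i)$-side. One is thus led, on the $B_1(i)$-side of $l(i)$, to compare the boundary arc of $P$ with the boundary arc of $P'$ (the latter being the reflected boundary of the $B_2(i)$-piece): these arcs have their endpoints on $l(i)$, and counting their intersections — using the inequality $B_1(i)\ge B_2(i)$ together with the fact that $P_i$ and $P_{i+n}$ sit on the smaller side — should yield at least two crossings there. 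With the mirror crossings on the $B_2(i)$-side this produces $\ge 4$ points of $\partial P\cap\partial P'$, hence $\ge 2$ chords of $P$ centered at $C$, and therefore $C\in\overline M$.

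The step I expect to be the main obstacle is exactly this last count: turning the weak separation ``$B_1(i)\ge B_2(i)$, with $P_i,P_{i+n}$ on the smaller side'' into the statement that $\partial P$ and $\partial P'$ meet in at least four points, equivalently that $C$ is the center of two, and not merely one, chord of $P$. Everything else — identifying the segment with a piece of $l(i)$ via \eqref{eq:defineInvoluta}--\eqref{eq:defineInvoluta2}, invoking the preceding lemma, and quoting the description of the region bounded by $M$ — is routine.
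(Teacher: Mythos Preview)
Your overall plan is the same as the paper's: show that $C$ is the midpoint of at least two chords of $P$ and then invoke the characterisation of the exterior of $M$. Your reflection formulation $P'=2C-P$ is equivalent to the paper's polar description (the condition $r(\phi+\pi)=r(\phi)$ is precisely $\partial P\cap\partial P'\neq\emptyset$ in the direction $\phi$), and your identification of the segment $N_{i-\frac12}N_{i+\frac12}$ with a piece of $l(i)$ is correct.

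However, the proposal is incomplete precisely where you say it is, and the area inequality $B_1(i)\ge B_2(i)$ together with ``$P_i,P_{i+n}$ lie on the smaller side'' is \emph{not} by itself enough to force two crossings of $\partial P$ and $\partial P'$ on the $B_1$-side. The endpoints on $l(i)$ of the $B_1$-arc of $\partial P$ and of the reflected $B_2$-arc do not coincide, so one convex arc of larger enclosed area can still contain the other without any crossing. What is missing is local control at those endpoints.

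The paper supplies exactly this missing ingredient. After an affine normalisation making $l(i)$ orthogonal to $M_iC$, one uses polar coordinates $(r,\phi)$ centred at $C$, with $\phi=0$ on $l(i)$ and $P_i$ at $\phi=-\phi_0$. A short convexity argument (comparing with a line parallel to $M_iC$ through the point $Q_0$ of $P$ at $\phi=0$) yields the corner inequalities $A(0,\phi_0)\le A(-\phi_0,0)$ and $A(\pi-\phi_0,\pi)\le A(\pi,\pi+\phi_0)$, together with $r(\phi_0)\le r(\phi_0+\pi)$ and $r(\pi-\phi_0)\le r(-\phi_0)$. Now if $r(\phi+\pi)>r(\phi)$ for all $\phi\in(\phi_0,\pi-\phi_0)$, summing the sector areas gives $B_1<B_2$, contradicting the previous lemma; hence the function $r(\phi)-r(\phi+\pi)$ must vanish in that range, and the endpoint inequalities force a further zero in $(\pi-\phi_0,\pi+\phi_0)$. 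This yields the required chords. In your reflection language, the convexity step is what pins down the relative position of the endpoints of the two arcs on $l(i)$ and turns the global area comparison into an honest intermediate-value argument; without it the count cannot be completed.
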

\begin{proof}
By an affine transformation of the plane, we may assume that $l(i)$ and $M_iC$ are orthogonal.
Consider polar coordinates $(r,\phi)$ with center $C$ and describe $P$ by $r(\phi)$. Assume that $\phi=0$ at the line $l(i)$ and that $\phi=-\phi_0$ at $P_i$.
Denote the area of the sector bounded by $P$ and the rays $\phi_1,\phi_2$ by
$$
A(\phi_1,\phi_2)=\frac{1}{2}\int_{\phi_1}^{\phi_2}r^2(\phi)d\phi.
$$

Consider a line parallel to $M_iC$ and passing through the point  $Q_0$ of $P$ corresponding to $\phi=0$, and denote by $Q_1$ and $Q_2$ its intersection with the rays $\phi=-\phi_0$
and $\phi=\phi_0$, respectively (see Figure \ref{fig:NAreas2}). By convexity, we have that
$$
A(0,\phi_0)\leq A(CQ_0Q_1)=A(CQ_0Q_2)\leq A(-\phi_0,0).
$$
A similar reasoning shows that
$ A(\pi-\phi_0,\pi)\leq A(\pi,\pi+\phi_0)$. Observe also that, by convexity, $r(\phi_0)\leq r(\phi_0+\pi)$ and $r(\pi-\phi_0)\leq r(-\phi_0)$.

Now, if $r(\phi+\pi)>r(\phi)$ for any $\phi_0<\phi<\pi-\phi_0$, we would have $B_1(C)<B_2(C)$, contradicting the previous lemma. We conclude that
$r(\phi+\pi)=r(\phi)$ for at least two values of $\phi_0<\phi<\pi-\phi_0$. Since equality holds also for some $\pi-\phi_0<\phi<\pi+\phi_0$,
there are at least three chords of $\gamma$ having $C$ as midpoint. Thus $C$ is contained in the region bounded by $M$.
\end{proof}

\begin{figure}[htb]
 \centering
 \includegraphics[width=0.60\linewidth]{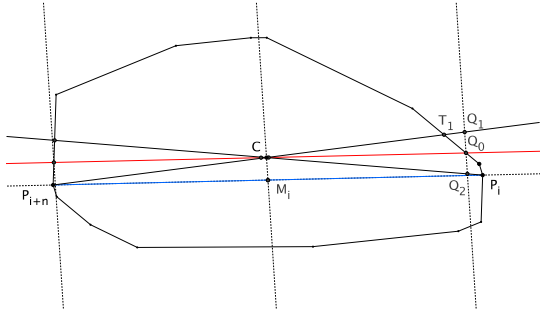}
 \caption{ The line parallel to $M_iC$ through $Q_0$ determines the points $Q_1$ and $Q_2$. }
\label{fig:NAreas2}
\end{figure}

We can now complete the proof of Proposition \ref{prop:NsubsetM}. In fact, from Lemma \ref{lemma:interiorM} we have that each side $N_{i-\frac{1}{2}}N_{i+\frac{1}{2}}$ is contained in the region
$\overline {M}$
bounded by $M$. Therefore, no point on the boundary of $N$ can be connected with the boundary of $P$ by a curve that does not intersect $M$. This implies that the region $\overline{N}$ bounded by $N$
is contained in $\overline{M}$.

\section{Iterating involutes}

Starting with the central equidistant $M=M(0)$ and its involute $N=N(1)$, we can iterate the involute operation. We obtain two sequences of $n$-gons $M(k)$ and $N(k)$ defined by
$M(k)={\mathcal Inv}(N(k))$ and $N(k+1)={\mathcal Inv}(M(k))$. For smooth curves of constant Minkowskian width, it is proved in \cite{Craizer14} that these sequences converge to a constant. We
prove here the corresponding result for polygons.

From Proposition \ref{prop:NsubsetM}, we have
$$
\overline {M(0)}\supset\overline {N(1)}\supset\overline {M(1)}\supset ...,
$$
and we denote by $O=O(P)$ the intersection of all these sets.

If we represent a polygon by its vertices, we can embed the space ${\mathcal P}_{n}$ of all $n$-gons in $(\R^2)^{n}$. In ${\mathcal P}_{n}$ we consider the topology induced
by $\R^{2n}$.

\begin{thm}\label{thm:ConvMiNi}
The set $O=O(P)$ consists of a unique point, and the polygons $M(k)$ and $N(k)$ are converging to $O$ in ${\mathcal P}_{n}$.
\end{thm}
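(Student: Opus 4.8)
My plan is to track the signed areas $SA(M(k))$ and $SA(N(k))$ along the iteration: they will be seen to form a nonincreasing sequence bounded below by $0$, and the successive gaps are, by Proposition~\ref{prop:signedMN}, sums of squares of the coefficients $\beta_i(k)$ of \eqref{eq:defineInvoluta}, which must therefore tend to $0$. Three facts are already available. First, every polygon in the chain $M(0),N(1),M(1),N(2),\dots$ is \emph{centered}, hence is the central equidistant of any of its convex equidistants (of constant $U$- or $V$-width); so, by the lemma of Section~\ref{sec:SignedAreas} asserting $A(M,M)\le0$ together with its mirror image under $U\leftrightarrow V$, all of $SA(M(0)),SA(N(1)),SA(M(1)),\dots$ are $\ge0$. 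Second, by Proposition~\ref{prop:signedMN} — applied to $M(k)$ and its involute $N(k+1)$, and, with $U$ and $V$ interchanged, to $N(k)$ and its involute $M(k)$ — the successive differences of the chain $SA(M(0)),SA(N(1)),SA(M(1)),\dots$ are all $\ge0$, and in particular
$$
SA(M(k))-SA(N(k+1))=\sum_{i=1}^{n}\beta_i(k)^2\bigl[V_{i-\frac12},V_{i+\frac12}\bigr],
$$
where $\beta_i(k)$ is the coefficient in \eqref{eq:defineInvoluta} for the step $M(k)\to N(k+1)$; here each $[V_{i-\frac12},V_{i+\frac12}]$ is a fixed positive number, since $V$ is a convex polygon with the origin in its interior. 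Third, Proposition~\ref{prop:NsubsetM} (applied to each central equidistant and its involute, for both choices of unit ball) shows that $\overline{M(0)}\supset\overline{N(1)}\supset\overline{M(1)}\supset\cdots$ is a decreasing chain of nonempty compact sets, each contained in $P$.

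Combining the first two facts, $SA(M(0))\ge SA(N(1))\ge SA(M(1))\ge\cdots\ge0$ converges, so $SA(M(k))-SA(N(k+1))\to0$; as the weights in the displayed identity are fixed and positive, this forces $\beta_i(k)\to0$ for every $i=1,\dots,n$. Then \eqref{eq:defineInvoluta} and \eqref{eq:defineInvoluta2} give $N(k+1)_{i+\frac12}-N(k+1)_{i-\frac12}=\beta_i(k)\bigl(V_{i+\frac12}-V_{i-\frac12}\bigr)$, so each edge of the $n$-gon $N(k+1)$ has length $|\beta_i(k)|\,|V_{i+\frac12}-V_{i-\frac12}|\to0$; hence its perimeter, and therefore its diameter (at most half the perimeter of a closed polygon), tends to $0$. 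Using \eqref{eq:deriveBeta} to deduce $\alpha_{i+\frac12}(k)\to0$ and then \eqref{eq:defineAlpha}, the same argument shows that the diameter of $M(k)$ tends to $0$ as well.

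Finally I would draw the topological conclusion. Since $\overline{N(k)}$ is contained in the convex hull of the curve $N(k)$ (the complement of that hull lies in the unbounded component of $\R^2\setminus N(k)$, which is part of the exterior of $N(k)$), the diameter of $\overline{N(k)}$ equals that of $N(k)$ and tends to $0$; a decreasing sequence of nonempty compact sets whose diameters tend to $0$ meets in exactly one point, so $O=O(P)=\bigcap_k\overline{N(k)}=\bigcap_k\overline{M(k)}$ is a single point. For convergence in $\mathcal{P}_n$, note that $O\in\overline{N(k+1)}\subset\overline{M(k)}\subset\overline{N(k)}$, so the diameter of $\overline{M(k)}$ also tends to $0$; since every vertex of $M(k)$ and the point $O$ lie in $\overline{M(k)}$, we get $|M(k)_i-O|\to0$ for all $i$, that is, $M(k)\to O$ in $\mathcal{P}_n$, and likewise $N(k)\to O$.

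The part requiring the most care is bookkeeping rather than a new idea: the comparisons must be between \emph{signed} areas (the central equidistants are in general not simple curves), which is precisely why the lemma of Section~\ref{sec:SignedAreas} and Proposition~\ref{prop:signedMN} — rather than a naive ``area of the enclosed region'' — are the right tools; and one must check that Proposition~\ref{prop:signedMN} is legitimately available in its $U\leftrightarrow V$-dual form, which holds because the construction of Sections~2--4 is symmetric under replacing $U$ by its dual $V$. The only substantive geometric point is that $\beta_i(k)\to0$ already forces the polygons to collapse; this becomes immediate once one notices that $\beta_i(k)$ is nothing other than the $V$-curvature radius of $N(k+1)$, after which only elementary facts about perimeters and nested compact sets are needed.
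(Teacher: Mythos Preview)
Your argument is correct and follows essentially the same route as the paper: bound the telescoping signed-area chain below by $0$ (Section~\ref{sec:SignedAreas}) and above by $SA(M(0))$, read off from Proposition~\ref{prop:signedMN} and its $U\leftrightarrow V$ dual that the coefficients $\beta_i(k)$ (and hence $\alpha_{i+\frac12}(k)$) tend to $0$, and conclude that the diameters collapse so that $O$ is a single point. The only cosmetic differences are that the paper invokes the dual of Proposition~\ref{prop:signedMN} directly to get $\alpha_{i+\frac12}(k)\to0$ rather than deducing it from \eqref{eq:deriveBeta}, and that you supply a bit more detail (convex-hull containment, Cantor's intersection theorem, vertexwise convergence) for the final topological step, which the paper leaves implicit.
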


We shall call $O=O(P)$ the {\it central point} of $P$. A natural question that arises is the following.
\paragraph{Question} Is there a direct method to obtain the central point $O$ from the polygon $P$?

For fixed $c$ and $d$ construct the sequences of convex polygons $P(k,c)$ and $Q(k,d)$ whose vertices are
$$
P_i(k)=M_i(k)+cU_i(k), \ \  Q_{i+\frac{1}{2}}(k)=N_{i+\frac{1}{2}}(k)+dV_{i+\frac{1}{2}}(k)\,,
$$
respectively.
The polygons $P(k,c)$ are of constant $U$-width, while the polygons $Q_{i+\frac{1}{2}}(k,d)$ are of constant $V$-width.
We can re-state Theorem \ref{thm:ConvMiNi} as follows:

\begin{thm}
The sequences of polygons $P(k,c)$ and $Q(k,d)$ are converging in ${\mathcal P}_{2n}$ to $O+c\partial\U$ and $O+d\partial\V$, respectively.
\end{thm}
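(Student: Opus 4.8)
The plan is to deduce the second theorem directly from Theorem \ref{thm:ConvMiNi}, since the two statements differ only by adding the fixed ``radial'' terms $cU_i(k)$ and $dV_{i+\frac12}(k)$ to the central equidistants $M(k)$ and $N(k)$. First I would observe that $P_i(k,c)$ is, by definition, the $c$-equidistant of $M(k)$, and that the family of equidistants $\{M(k)+cU_i(k)\}$ is exactly the family of polygons of constant $U$-width whose evolute is the common evolute $E(M(k))=N(k)$ (by the subsection ``Involutes and equidistants''). So the statement amounts to controlling, simultaneously, the convergence of the vertices $M_i(k)$ and the convergence of the ``shape data'' $U(k)$, $V(k)$ of the nested normed planes. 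The key point is that, by Theorem \ref{thm:ConvMiNi}, $M(k)\to O$ and $N(k)\to O$ as point-sets in $\mathcal P_n$, i.e.\ every vertex $M_i(k)\to O$ and $N_{i+\frac12}(k)\to O$.

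Second, I would pin down what happens to the unit balls $U(k)$ and $V(k)$ along the iteration. A priori these could be re-scaled at each step, so I would fix the normalization implicit in Lemma \ref{lemma:SymmetricBall} (the constant $a$, equivalently the choice of $c$ with $\mu_{i+\frac12}+\mu_{i+n+\frac12}=2c$). The directions of the sides of $U(k)$ and $V(k)$ are \emph{constant} in $k$: $U(k)$ always has sides parallel to the $2n$ fixed directions $\theta_1,\dots,\theta_n$ determined by $P$, and $V(k)$ has the dual set of directions, by the construction in Section 2. Hence each $U(k)$ is determined by its $n$ curvature-type parameters, and I would show these stay bounded and bounded away from degeneracy; with a fixed normalization one in fact gets that $U(k)$ and $V(k)$ are \emph{eventually constant} polygons $U_\infty$, $V_\infty$ (up to the choice of scale), because the combinatorial type and the direction data do not change and the width normalization is preserved by the involute operation. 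Granting this, $P_i(k,c)=M_i(k)+cU_i(k)\to O + c\,U_\infty{}_{,i}$, so $P(k,c)\to O+c\,\partial\U$ vertexwise in $\mathcal P_{2n}$, and likewise $Q(k,d)\to O+d\,\partial\V$; that $O+c\partial\U$ is a genuine $U$-ball centered at $O$ is immediate since $O+cU$ is a translate of $cU$.

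The main obstacle, and the step I would spend the most care on, is the bookkeeping of the normalizations of $U(k)$ and $V(k)$ under iteration of $\mathcal Inv$. One must check that the involute operation as defined in \eqref{eq:defineInvoluta} — which produces $N$ from $M$ using the dual ball $V$ — and its inverse do not secretly rescale the ambient ball, so that ``$M(k)+cU_i(k)$'' refers to the same geometric object for all $k$ and the limit ball is well defined and nondegenerate. Concretely I would verify that $[U_i(k),U_{i+1}(k)]$ and $[V_{i-\frac12}(k),V_{i+\frac12}(k)]$ converge (to the corresponding quantities for $U_\infty$, $V_\infty$), using the relations between $U$ and $V$ in Section 2 together with the fact that the $\beta_i(k)$ and $\mu_{i+\frac12}(k)$ tend to $0$ (which follows from $M(k),N(k)\to O$ via \eqref{eq:defineInvoluta} and \eqref{eq:defineEvoluta}). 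Once that convergence is in hand, the rest is a two-line consequence of Theorem \ref{thm:ConvMiNi} and continuity of vector addition in $(\R^2)^{2n}$.

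\begin{proof}
By definition $P_i(k,c)=M_i(k)+cU_i(k)$ and $Q_{i+\frac12}(k,d)=N_{i+\frac12}(k)+dV_{i+\frac12}(k)$. By the construction in Section 2, the polygons $U(k)$ all have their $2n$ sides in the same fixed directions $\theta_1,\dots,\theta_n$ determined by $P$, and the $V(k)$ in the dual directions; moreover the involute operation of \eqref{eq:defineInvoluta} and its inverse preserve the normalization fixed in Lemma \ref{lemma:SymmetricBall}, so that $U(k)=\U$ and $V(k)=\V$ for all $k$, with $\partial\U$, $\partial\V$ the corresponding unit circles. By Theorem \ref{thm:ConvMiNi}, $M_i(k)\to O$ for each $1\leq i\leq n$ and $N_{i+\frac12}(k)\to O$ for each $1\leq i\leq n$. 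Since addition in $(\R^2)^{2n}$ is continuous, it follows that $P_i(k,c)\to O+cU_i$ and $Q_{i+\frac12}(k,d)\to O+dV_{i+\frac12}$ for every $i$; that is, $P(k,c)\to O+c\,\partial\U$ and $Q(k,d)\to O+d\,\partial\V$ in $\mathcal P_{2n}$. Finally $O+c\,\partial\U$ is the boundary of the translate $O+c\U$, hence a $U$-ball of radius $|c|$ centered at $O$, and likewise $O+d\,\partial\V$ is a $V$-ball centered at $O$.
\end{proof}
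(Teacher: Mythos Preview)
Your proposal is correct and follows the same route as the paper: the paper explicitly introduces this theorem with the words ``We can re-state Theorem \ref{thm:ConvMiNi} as follows,'' gives no separate proof, and then proves only Theorem \ref{thm:ConvMiNi}. So deducing the statement from Theorem \ref{thm:ConvMiNi} by adding the fixed radial terms is exactly what is intended.

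That said, most of your preliminary discussion is unnecessary. The unit balls $U$ and $V$ are fixed once and for all by the original polygon $P$ (Lemmas \ref{lemma:SymmetricBall} and \ref{lemma:DualBall}); they are not rebuilt at each step of the iteration. Indeed, in the proof of Theorem \ref{thm:ConvMiNi} the paper writes $M_{i+1}(k)-M_i(k)=\alpha_{i+\frac12}(k)(U_{i+1}-U_i)$ and $N_{i+\frac12}(k)-N_{i-\frac12}(k)=\beta_i(k)(V_{i+\frac12}-V_{i-\frac12})$ with $U_i$, $V_{i+\frac12}$ independent of $k$; the appearance of $U_i(k)$, $V_{i+\frac12}(k)$ in the statement is just a notational slip. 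The duality $U\leftrightarrow V$ is an involution, so the $M(k)$ are all of constant $U$-width and the $N(k)$ of constant $V$-width with the \emph{same} $U$ and $V$ throughout. Thus there is nothing to check about convergence of $[U_i(k),U_{i+1}(k)]$ or about ``eventually constant'' balls; your final short proof is all that is needed.
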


\begin{figure}[htb]
 \centering
 \includegraphics[width=0.90\linewidth]{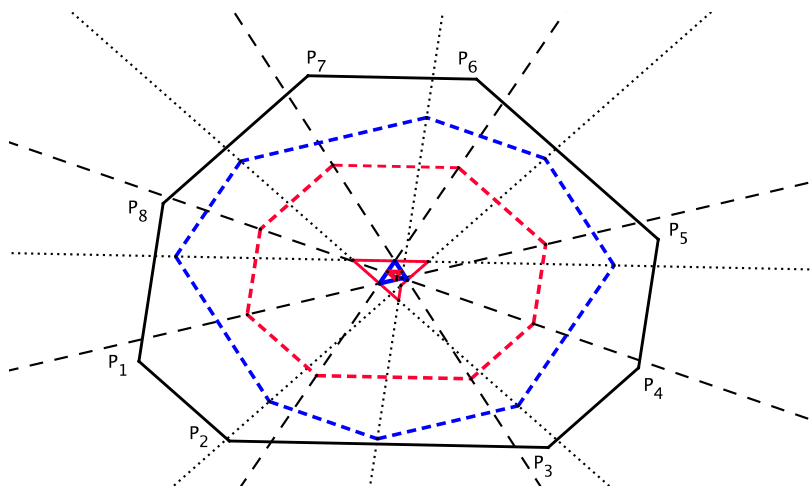}
 \caption{ The inner curves are $M=M(0)$, $N=N(1)$ and $M(1)$. One traced curve is an ordinary $V$-equidistant of $N$, and the other one is
 an ordinary $U$-equidistant of $M(1)$.  }
\label{fig:OctoIterates}
\end{figure}

\bigskip

\bigskip\noindent
We shall prove now Theorem \ref{thm:ConvMiNi}.

\begin{proof}
Denote the signed areas of $M(k)$ and $N(k)$ by $SA(M(k))$ and $SA(N(k))$, respectively.
By Section \ref{sec:SignedAreas}, $SA(M(k))\geq 0$, $SA(N(k))\geq 0$, and Proposition \ref{prop:signedMN} implies that
$$
SA(M(k))-SA(N(k+1))=\sum_{i=1}^{n} \beta_{i}^2(k)[U_i,U_{i+1}],
$$
$$
SA(N(k))-SA(M(k))=\sum_{i=1}^{n} \alpha_{i+\frac{1}{2}}^2(k)[V_{i-\frac{1}{2}},V_{i+\frac{1}{2}}],
$$
where $\alpha_{i+\frac{1}{2}}(k)$ and $\beta_i(k)$ are defined by
\begin{equation*}
\begin{array}{l}
M_{i+1}(k)-M_i(k)=\alpha_{i+\frac{1}{2}}(k)( U_{i+1}-U_i),\ \\
N_{i+\frac{1}{2}}(k)-N_{i-\frac{1}{2}}(k)=\beta_{i}(k) (V_{i+\frac{1}{2}}-V_{i-\frac{1}{2}}).
\end{array}
\end{equation*}
We conclude that
\begin{equation}\label{eq:sumsquares}
\sum_{k=1}^{\infty} \sum_{i=1}^{n} \beta_{i}^2(k)[U_i,U_{i+1}] +\sum_{k=0}^{\infty} \sum_{i=1}^{n} \alpha_{i+\frac{1}{2}}^2(k)[V_{i-\frac{1}{2}},V_{i+\frac{1}{2}}] \leq SA(M(0)).
\end{equation}

From the above equation, we obtain that the sequences $\alpha_{i+\frac{1}{2}}(k)$ and $\beta_i(k)$ are converging to $0$ in $\R^{n}$.
So the diameters of $M(k)$ and $N(k)$ are converging to zero, and thus $O$ is in fact a set consisting of a unique point.
\end{proof}

% ------------------------------------------------------------------------
\end{document}